\DeclareMathOperator*{\argmin}{arg\,min}
\DeclareMathOperator*{\argmax}{arg\,max}
\newcommand{\Diag}{\textrm{Diag}}
\newcommand{\diag}{\textrm{diag}}
\newcommand{\tr}{\textrm{tr}}
\newcommand{\ip}[2]{\left\langle #1,#2\right\rangle}
\newcommand{\inprod}[2]{\langle #1,#2\rangle}
\theoremstyle{thmstyleone}%
\newtheorem{theorem}{Theorem}
\newtheorem{lemma}{Lemma}
\newtheorem{proposition}[theorem]{Proposition}%
\theoremstyle{thmstyletwo}%
\newtheorem{remark}{Remark}%
\theoremstyle{thmstylethree}%
\begin{document}

\title[Exact Cardinality-constrained Clustering]{Global Optimization for Cardinality-constrained Minimum Sum-of-Squares
Clustering via Semidefinite Programming}


\author*[1]{\fnm{Veronica} \sur{Piccialli}}\email{veronica.piccialli@uniroma1.it}

\author[1]{\fnm{Antonio M.} \sur{Sudoso}}\email{antoniomaria.sudoso@uniroma1.it}



\affil[1]{\orgdiv{Department of Computer, Control and Management Engineering}, \orgname{Sapienza University of Rome}, \orgaddress{\street{Via Ariosto 25}, \city{Rome}, \postcode{00185}, \country{Italy}}}




\abstract{The minimum sum-of-squares clustering (MSSC), or k-means type clustering, has been recently extended to exploit prior knowledge on the cardinality of each cluster. Such knowledge is used to increase performance as well as solution quality. In this paper, we propose a global optimization approach based on the branch-and-cut technique to solve the cardinality-constrained MSSC. For the lower bound routine, we use the semidefinite programming (SDP) relaxation recently proposed by Rujeerapaiboon et al. [SIAM J. Optim. 29(2), 1211-1239, (2019)]. However, this relaxation can be used in a branch-and-cut method only for small-size instances. Therefore, we derive a new SDP relaxation that scales better with the instance size and the number of clusters. In both cases, we strengthen the bound by adding polyhedral cuts. Benefiting from a tailored branching strategy which enforces pairwise constraints, we reduce the complexity of the problems arising in the children nodes. For the upper bound, instead, we present a local search procedure that exploits the solution of the SDP relaxation solved at each node. Computational results show that the proposed algorithm globally solves, for the first time, real-world instances of size 10 times larger than those solved by state-of-the-art exact methods.}

\keywords{Global Optimization, Constrained Clustering, Semidefinite Programming, Branch-and-Cut, {Distance Geometry}}


\pacs[MSC Classification]{90C26, 90C22, 62H30}

\maketitle

\section{Introduction}
Cluster analysis or clustering is the task of partitioning similar objects into different groups according to some defined distance measure \cite{rao1971cluster}.
Clustering is a fundamental technique for data analysis that belongs to a subclass of unsupervised learning algorithms in machine learning and statistics. Its relevance is supported by a large number of algorithms and applications such as customer segmentation, medical imaging, recommendation systems, social network analysis, and image processing \cite{ganclustering}. The complexity of finding a suitable clustering significantly depends on the fitness measure of a proposed partition. Among many criteria used in cluster analysis, the most studied and frequently adopted criterion is the minimum sum-of-squares clustering (MSSC). Given a set of $n$ data points $\{p_i\}_{i=1}^n$ in the $d$-dimensional Euclidean space, the MSSC problem aims to partition them into $k$ clusters $\{C_j\}_{j=1}^k$, so that the total sum of squared Euclidean distances between the data points and the cluster centers $\{m_j\}_{j=1}^k$ is minimized. 
The MSSC problem is known to be NP-hard \cite{aloise2009complexity}, since it leads to a non-convex mixed-integer nonlinear optimization problem (MINLP) that is challenging to solve in practice \cite{rujeerapaiboon2019size}. This is why the MSSC is frequently addressed by means of heuristic algorithms in comparison with exact methods. 

In recent years, researchers have started to focus on clustering with user constraints, i.e., instance-level and cluster-level constraints, to make the clustering process more accurate \cite{davidson2007survey}. Instance-level constraints, typically must-link and cannot-link constraints, indicate that two points must or cannot be assigned to the same cluster \cite{wagstaff2001constrained, baumann2020binary}. Cluster-level constraints, instead, exploit prior knowledge on the structure of the clusters by imposing fixed cluster sizes or lower and upper bounds on the capacity of each cluster \cite{banerjee2006scalable, zhu2010data, gnagi2021matheuristic}. Cardinality constraints find relevant applications in demand planning and segmentation \cite{mancuso2021machine}, electric power systems research \cite{balletti2022mixed}, document clustering \cite{hu2008towards}.
{Both in unconstrained and constrained clustering, the presence of outliers can deteriorate the quality of the solution and should be carefully considered. Strict cardinality constraints offer a framework to keep into account the presence of outliers, improving the quality of the solution of the clustering problem \cite{rujeerapaiboon2019size}. This is achieved in \cite{rujeerapaiboon2019size} by allocating an extra cluster to accomodate outliers.
}
For a detailed overview of constrained clustering applications see the survey in \cite{ganccarski2020constrained} and references therein. 
In this paper, we focus on the MSSC problem with strict cardinality constraints, in short ccMSSC. 
Adding constraints to an existing clustering formulation may make the resulting problem harder, both empirically and in terms of worst-case analysis  \cite{liberti2021side}. 
However, if clustering problems are only solved by means of heuristic algorithms, unsatisfactory results may be obtained \cite{randel2021lagrangian}. 
For this reason, global algorithms play a fundamental role in two different aspects. First, a certified globally optimal solution is fundamental for evaluating, improving, and developing heuristics and approximation algorithms, and second, as an unsupervised machine learning task, results usually require interpretation from domain experts. Since the majority of algorithms can locate only local minimizers, this interpretation may be completely erroneous in case the obtained clustering solution is far from the global optimum.



Our motivation for studying ccMSSC is twofold. First, constrained clustering with instance-level constraints is a well-covered topic in the literature with both heuristic and global optimization algorithms. Clustering with cardinality constraints, instead, is less studied and the existing methods are only capable of solving instances with a very limited number of data points, i.e., less than $n=150$. Second,
clustering with cardinality constraints is robust with respect to unfair solutions containing very few or even no data points. 
In this paper, we propose a global optimization algorithm for constrained MSSC where the prior knowledge is incorporated in the form of strict cardinality constraints. This method is based on the branch-and-cut technique and exploits semidefinite programming (SDP) tools to obtain tight lower and upper bounds.
The main original contributions of this paper are:
\begin{enumerate}
\item We derive a new SDP relaxation for ccMSSC and we compare it theoretically and numerically with the SDP relaxation recently proposed in \cite{rujeerapaiboon2019size}. The new bound can be weaker than the one in \cite{rujeerapaiboon2019size}, but is solved significantly faster as $n$ and $k$ increase.
\item We strengthen the new SDP bound and the one in \cite{rujeerapaiboon2019size} by adding polyhedral cuts and we attack the resulting SDP relaxations by means of a cutting-plane algorithm. 
\item We design a deterministic rounding procedure providing an upper bound on the optimum value of ccMSSC. We use a variant of the $k$-means algorithm in conjunction with the solution of the SDP relaxation to initialize the cluster centers.
\item We propose an SDP-based branch-and-cut algorithm producing certifiably optimal solutions for ccMSSC. We use both SDP relaxations for computing the lower bound, exploiting the strength of the one in \cite{rujeerapaiboon2019size} for small instances and the efficiency of the new one for larger instances.
\item Benefiting from a tailored branching strategy, we manage to reduce the size of the problem in the children nodes by solving equivalent SDPs over lower dimensional positive semidefinite cones.
\item Our exact algorithm solves for the first time to global optimality real-world instances of size 10 times larger than those solved by exact algorithms proposed in the literature. 
\end{enumerate}
The rest of this paper is organized as follows. Section \ref{sec:review} reviews the literature related to the ccMSSC problem. In Section \ref{sec:sdp_relax}, the existing SDP relaxation of ccMSSC and the new SDP bound are described. In Section \ref{sec:sdp_safe}, problem-specific post-processing techniques yielding valid lower bounds are described. In Section \ref{sec:cutting_plane}, the cutting-plane algorithm used for computing the bound is proposed. A problem-specific branching rule is shown in Section \ref{sec:branching}, and a primal heuristic is introduced in Section \ref{sec:heuristic}. Computational results are reported in Section \ref{sec:results} and some directions for future work conclude the paper in Section \ref{sec:conclusions}.

\section{Related Work}
\label{sec:review}
To the best of our knowledge, the first heuristic algorithm for ccMSSC can be found in \cite{bradley2000constrained}. They use the classical Lloyd's algorithm \cite{lloyd1982least} in conjunction with the solution of a linear assignment problem requiring that each cluster contains at least a minimum number of points. Since then, not many heuristics have been proposed in the literature. In \cite{banerjee2006scalable, malinen2014balanced, costa2017less}, different heuristic algorithms are proposed to solve the balanced version of ccMSSC where the clusters have the same number of data points, i.e., $\lvert C_1 \rvert = \lvert C_2 \rvert = \ldots = \lvert C_k \rvert = n/k$.
Although these heuristics tend to produce feasible solutions in a reasonable amount of time, their quality highly depends on the choice of the initial solution and, more importantly, they do not provide optimality guarantees. In the literature, several global optimization algorithms have been proposed for both unconstrained and constrained MSSC problems. In this direction, there are two main methodological approaches: (a) mathematical programming techniques based on branch-and-bound (B\&B) or column generation and (b) constraint programming (CP) techniques. The first class of methods is predominant in the unconstrained literature \cite{aloise2009branch,
aloise2012improved, krislock2016computational, piccialli2022sos} with some extensions designed to solve the MSSC with instance-level constraints \cite{xia2009global, babaki2014constrained, piccialli2022exact}. However, most of the recent contributions to constrained variants come from the CP community. Among the exact methods for constrained MSSC, the method presented in \cite{duong2013declarative} is the first attempt at using CP for MSSC. Other successful approaches based on the CP paradigm can be found in \cite{duong2017constrained, guns2016repetitive}.
Turning now to the MSSC with cardinality constraints, all the CP approaches proposed for the MSSC with instance-level constraints can be extended to solve the ccMSSC by simply adding a global cardinality constraint over a set of variables. However, as described in \cite{haouas2020exact}, a tailored approach consisting of specialized global constraints coupled with filtering algorithms and search heuristics can quickly reduce the search space and achieve better performance. Despite the improvement with respect to previous CP approaches, the algorithm in \cite{haouas2020exact} exhibits high computational times on small instances, and for some of them, it does not succeed in certifying the optimality of the produced clustering. To the best of our knowledge, this CP approach is the current state-of-the-art global optimization algorithm of ccMSSC.

In recent years, several conic optimization approaches have been proposed in the literature. In particular, due to their effectiveness and strong theoretical properties, there is a large branch of literature toward the application of techniques from semidefinite programming (SDP) \cite{vandenberghe1996semidefinite}. Peng and Wei \cite{peng2007approximating} prove the equivalence between the MSSC problem and a nonlinear SDP reformulation, the so-called 0-1 SDP, and they provide an SDP relaxation. Since then, the Peng-Wei model has been studied from the theoretical point of view developing recovery guarantees and conditions under which the relaxation recovers the underlying clusters with high probability \cite{awasthi2015relax, iguchi2017probably, li2020birds, de2022ratio}. Recently, it has been successfully embedded in B\&B algorithms to globally solve unconstrained MSSC \cite{piccialli2022sos} and its variant with pairwise constraints \cite{piccialli2022exact}. 
To the best of our knowledge, the only conic optimization scheme for ccMSSC has been proposed in \cite{rujeerapaiboon2019size}. Although the focus is not on exact methods, the authors propose lower bounds based on semidefinite and linear programming relaxations and a rounding heuristic to find a feasible clustering satisfying the constraints. 

{The LP and SDP relaxations in \cite{rujeerapaiboon2019size} are proven to recover the globally optimal solution whenever a cluster separation condition is met, but only in the balanced case. The separation condition requires all cluster diameters to be smaller than the distance between any two distinct clusters. In other words, for datasets whose hidden classes are balanced and well separated, they succeed in recovering the provably optimal clustering.
However, when this condition is not met, they are unable to prove optimality since the upper bound produced by the rounding procedure does not coincide with the lower bound obtained via the relaxations. Furthermore, in real-world applications, data may not be generated according to a distribution that yields well-separated clusters, and this may result in unbalanced and overlapping clusters, implying that the theory behind cluster recovery does not apply. For this reason, in this paper, we 
certify the globally optimal solution by using global optimization tools that do not require assumptions on the data distribution and/or on the clusters' structure.} In this paper we build on the work in \cite{rujeerapaiboon2019size} and we design the first SDP-based branch-and-cut algorithm for solving ccMSSC. 



\subsection*{Notation}
Throughout this paper, $[n]$ denotes the set $\{1, \dots, n\}$ of indices of the data points, and $[k]$ denotes the set $\{1, \dots, k\}$ of indices of the clusters. Let $\mathcal{S}^n$ be the set of all $n\times n$ real symmetric matrices. We denote by $M\succeq 0$ a positive semidefinite matrix $M$ and by $\mathcal{S}_+^n$ be the set of all positive semidefinite matrices of size $n\times n$. Analogously, we denote by $M \succ 0$ a positive definite matrix $M$ and let ${\mathcal S}_{++}^n$ be the set of all positive definite matrices of size $n\times n$. We denote by $\inprod{\cdot}{\cdot}$ the
trace inner product. That is, for any
$A, B \in \mathbb{R}^{n\times n}$, we define $\inprod{A}{B}:= \tr (B^\top A)$. Given a matrix $A$, we denote by $A_{i, :}$ and $A_{:,j}$ the $i$-th row and the $j$-th column of $A$, respectively. Finally, we denote by $1_n$ the vector of all ones of size $n$ and by $I_n$ the identity matrix of size $n \times n$.

\section{SDP Relaxations for ccMSSC}\label{sec:sdp_relax}
We study the ccMSSC problem, which is defined as
the task of partitioning $n$ data points $p_1, \dots, p_n$, where $p_i \in \mathbb{R}^d$,
into $k$ clusters of \textit{known} sizes
$c_1, \dots, c_k$, with $\sum_{j=1}^k c_j = n$ and $c_j \in \mathbb{Z}_+$, such that the total sum-of-squared intra-cluster distances is minimized. It can be formulated as the following discrete optimization problem:
\begin{subequations}
\begin{alignat}{2}\tag{ccMSSC}
&\!\min        &\quad& \frac{1}{2} \sum_{j=1}^{k} \frac{1}{c_j} \sum_{s=1}^{n} \sum_{t=1}^{n} d_{st} (\pi_j)_s (\pi_j)_t\label{prob:ccMSSC1} \\
&\text{s.t.} &      & \sum_{j=1}^{k} (\pi_j)_i = 1 \ \quad  \forall i \in [n],\label{ccMSSC:hard}\\
&                  &      & \sum_{i=1}^{n} (\pi_j)_i = c_j \quad  \forall j \in [k],\label{ccMSSC:cardinality}\\
&                  &      & \pi_j \in \{0, 1\}^n \ \ \quad \forall j \in [k].
\end{alignat}
\end{subequations}
Here, $d_{st}$ is the squared Euclidean distance between $p_s$ and $p_t$ computed as $\|p_s - p_t\|_2^2$ and $\pi_j = [(\pi_j)_1, \dots, (\pi_j)_n]^\top$ is the indicator variable of cluster $j$, i.e., the $i$-th component of $\pi_j$ is set to 1 is the $i$-th data point is assigned to cluster $j$ and 0 otherwise. Constraints (\ref{ccMSSC:hard}) ensure that each data point is assigned to exactly one cluster and constraints (\ref{ccMSSC:cardinality}) ensure that the cluster $j$ contains exactly $c_j$ data points.
For general $n$ and $k$, it is known that the MSSC problem with cardinality constraints is NP-hard \cite{rujeerapaiboon2019size}.
In this paper we are interested in exact solutions that we generate by means of a branch-and-cut algorithm. Therefore, we need efficiently computable lower and upper bounds on the original problem. For the ccMSSC problem, lower bounds obtained by solving the LP relaxation are weak in practice \cite{rujeerapaiboon2019size}. On the contrary, the bound provided by SDP relaxations strengthened through valid inequalities we will introduce next is much stronger. This motivates us to study SDPs that can be efficiently used within a branch-and-cut framework.
\subsection{Vector lifting SDP relaxation}
\noindent
In this section we review the relaxation technique presented in \cite{rujeerapaiboon2019size} for problem \eqref{prob:ccMSSC1}. 
{We present the formulation introduced in \cite{rujeerapaiboon2019size}, where we shift the decision variables from $\{-1,1\}$ to $\{0,1\}$. This change of variables allows us to better relate the relaxation in \cite{rujeerapaiboon2019size} to the new one introduced in this paper}. Let $\Pi_j$ be the $n\times n$ symmetric matrix given by $\Pi_j = \pi_j (\pi_j)^\top$ for $\pi_j \in \{0, 1\}^n$ and $j \in [k]$. This implies $\Pi_j \succeq \pi_j (\pi_j)^\top$, $\diag(\Pi_j) = \pi_j$ and $\textrm{rank}(\Pi_j) = 1$. Conversely, for any matrix $\Pi_j$ satisfying $\Pi_j \succeq 0$, $\diag(\Pi_j) = \pi_j$ and $\textrm{rank}(\Pi_j) = 1$ we have $\Pi_j = \pi_j (\pi_j)^\top$ for some $\pi_j \in \{0, 1\}^n$ \cite{vandenberghe1996semidefinite}. 
Denote by $D$ the {Euclidean distance matrix}, i.e., $D_{ij} = \| p_i - p_j\|_2^2$. Problem \eqref{prob:ccMSSC1} can be exactly rewritten as
\begin{mini}[2]
{}{\frac{1}{2} \left \langle D, \sum_{j=1}^{k} \frac{1}{c_j} \Pi_j \right \rangle}
    {\label{prob:vlSDPnonconvex}}{}
\addConstraint{\sum_{j=1}^{k} \pi_j}{= 1_n}{}
\addConstraint{1_n^\top \pi_j}{= c_j}{\quad \forall j \in [k]}
\addConstraint{\Pi_j}{\succeq \pi_j (\pi_{j})^\top}{\quad  \forall j \in [k]}
\addConstraint{\textrm{rank}(\Pi_j)}{= 1}{\quad  \forall j \in [k]}
\addConstraint{\diag(\Pi_j)}{= \pi_j}{\quad \forall j \in [k]}.
\end{mini}
Because of the rank-one constraints, this is not a semidefinite program. 
Constraint $\Pi_j - \pi_j (\pi_j)^\top \succeq 0$ can be reformulated, by the Schur's complement, as the lifted constraints
\begin{equation*}
{Y_j=}   \begin{bmatrix}
1 & (\pi_j)^\top \\
\pi_j & \Pi_j
\end{bmatrix} \in \mathcal{S}_{+}^{n+1} \qquad \forall j \in [k].
\end{equation*}
Furthermore, looking at the structure of $\Pi_j$ additional constraints can be added to strengthen the formulation. It easily follows that $\Pi_j 1_n = c_j \pi_j$ and $\Pi_j \geq 0_{n \times n}$ for all $j \in [k]$.

{Let $W \in \mathcal{S}^n_+$ be the matrix of the inner products of the data points, i.e., $W_{ij} = p_i^\top p_j$. We now rewrite the objective function using the {Lindenstrauss mapping} between the {Euclidean distance matrix} $D$ and the Gram matrix $W$ on the clustering feasible set \cite{edmBook, alfakih2018euclidean}:
\begin{align*}
    D = \diag(W)1_n^\top + 1_n \diag(W)^\top - 2 W.
\end{align*}
The objective function of problem \eqref{prob:vlSDPnonconvex} can be rewritten as
{\footnotesize
\begin{align*}
    \frac{1}{2} \left\langle D, \sum_{j=1}^{k} \frac{1}{c_j} \Pi_j \right\rangle &= \frac{1}{2} \left\langle \diag(W)1_n^\top, \sum_{j=1}^{k} \frac{1}{c_j} \Pi_j \right\rangle + \frac{1}{2} \left\langle 1_n \diag(W)^\top, \sum_{j=1}^{k} \frac{1}{c_j} \Pi_j \right\rangle - \left\langle W, \sum_{j=1}^{k} \frac{1}{c_j} \Pi_j \right\rangle \\
    &= \frac{1}{2} \ip{\diag(W)}{\sum_{j=1}^{k} \frac{1}{c_j} \Pi_j 1_n} + \frac{1}{2} \ip{\diag(W)}{\sum_{j=1}^{k} \frac{1}{c_j} (\Pi_j)^\top 1_n} -  \left\langle W, \sum_{j=1}^{k} \frac{1}{c_j} \Pi_j \right\rangle \\
    & = \frac{1}{2} \diag(W)^\top 1_n + \frac{1}{2} 1_n^\top \diag(W) - \ip{W}{\sum_{j=1}^{k} \frac{1}{c_j} \Pi_j} = \tr(W) - \ip{W}{\sum_{j=1}^{k} \frac{1}{c_j} \Pi_j},
\end{align*}}
where the last equality derives from the constraints linking $\Pi_j$ and $\pi_j$, i.e., $\Pi_j 1_n = c_j \pi_j$ and $\sum_{j=1}^{k} \pi_j = 1_n$.}
Therefore, the resulting SDP relaxation is the convex program
\begin{equation}
\begin{aligned}\label{prob:vlSDP}
f^\star_{\textrm{VL}} = \min_{} \quad & {\ip{W}{I_n-\sum_{j=1}^{k} \frac{1}{c_j} \Pi_j}}\\
\textrm{s.t.} \quad & \sum_{j=1}^{k} \pi_j= 1_n\\
& (\pi_j, \Pi_j) \in \mathcal{S}_{\textrm{VL}}(c_j) \quad \forall j \in [k]   \\
\end{aligned}\tag{VL-SDP}
\end{equation}
where, for any $c \in \mathbb{Z}_+$, the set $\mathcal{S}_{\textrm{VL}}(c) \subset \mathbb{R}^n \times \mathcal{S}^n$ is defined as
\begin{align*}
    \mathcal{S_{\textrm{VL}}}(c) = \left\{(\pi, \Pi) \in \mathbb{R}^n \times \mathcal{S}^n : \begin{array}{l}
    1_n^\top \pi = c, \ \diag(\Pi) = \pi, \ \Pi 1_n = c \pi, \\
    \Pi \succeq \pi (\pi)^\top, \ \Pi \geq 0_{n \times n} \\
    \end{array} \right\}.
\end{align*}
From now on, we refer to the relaxed problem as ``vector lifting'' SDP relaxation. This kind of SDP is a doubly nonnegative program (DNN) since the matrix variable is both positive semidefinite and elementwise nonnegative. Problem \eqref{prob:vlSDP} provides a lower bound on the optimal objective value of problem \eqref{prob:ccMSSC1}. Moreover, if the
optimal solution $\{(\pi_j^\star, \Pi_j^\star)\}_{j=1}^k$ of problem \eqref{prob:vlSDP} satisfies $\Pi_j = \pi_j (\pi_j)^\top$ for all $j \in [k]$, then we can conclude that  $\{(\pi_j^\star, \Pi_j^\star)\}_{j=1}^k$ is an optimal solution of problem \eqref{prob:ccMSSC1}.
Note that, this SDP relaxation
has $k$ matrix variables of size $(n+1) \times (n+1)$, thus requires $O(kn^2)$ variables and constraints. 
Although the number of clusters $k$ is much smaller than $n$, the relaxation becomes impractical for branch-and-bound algorithms as $n$ and $k$ grow. In the following, we substantially reduce the number of variables by introducing a new SDP bound. This relaxation is based on the ``matrix lifting'' technique \cite{mittelmann2010estimating, ding2011equivalence} and only has one matrix variable of size $(n+k) \times (n+k)$. We will show that this reduction in the number of variables leads to a significant computational efficiency compared to the vector lifting SDP relaxation.
{Note that, the matrix lifting technique has been also successfully employed in graph partitioning \cite{wolkowicz1999semidefinite, li2021strictly}, where the nodes of a graph have to be partitioned into clusters in such a way to minimize the weights of the edges among different clusters.}


\subsection{Matrix lifting SDP Relaxation}

\noindent
We now describe the new bound obtained from our matrix lifting SDP relaxation. This relaxation uses only {$O\left((n+k)^2\right)$} variables and constraints. 
Since the computational complexity of solving SDPs is a polynomial function of the number of variables, we can expect a significant complexity reduction with respect to the vector lifting SDP relaxation. In the following, we briefly review the well-known discrete optimization model for unconstrained MSSC and the SDP relaxation proposed in \cite{peng2007approximating}. Let $X$ be the $n \times k$ assignment matrix, i.e., $X_{ij} = 1$ if the $i$-th data point is assigned to the $j$-th cluster and 0 otherwise. The Peng-Wei MSSC discrete model in matrix notation is
\begin{equation}
\begin{aligned}\label{prob:PWdiscrete}
\min_{} \quad & \tr(W - W X (X^\top X)^{-1} X^\top)\\
\textrm{s.t.} \quad & X 1_k = 1_n, \ X^\top 1_n \geq 0_k, \ X \in \{0, 1\}^{n \times k}.
\end{aligned}
\end{equation}
and setting $Z = X(X^\top X)^{-1}X^\top$, the following SDP relaxation can be derived
\begin{equation}
\begin{aligned}\label{prob:SDPpeng}
f^\star_\textrm{PW}=\min_{} \quad & \tr(W - W Z)\\
\textrm{s.t.} \quad & Z 1_n=1_n, \ \tr(Z)=k, \ Z \in \mathcal{S}^n_+, \ Z \geq 0_{n \times n}.
\end{aligned}
\end{equation}
Note that constraints $Z \geq 0_{n \times n}$ and $Z 1_n = 1_n$ ensure that $Z$ is a stochastic matrix, and hence all of its eigenvalues lie between 0 and 1. We now extend the above setting to incorporate cardinality constraints. Let $C = \Diag(c_1, \dots, c_k)$, be the diagonal matrix containing the cluster sizes. 
An exact reformulation of ccMSSC in matrix notation is
\begin{equation}
\begin{aligned}\label{prob:ccMSSC2}
\min_{} \quad & \tr(W - W X C^{-1} X^\top)\\
\textrm{s.t.} \quad & X 1_k= 1_n, \ X^\top 1_n = \diag(C), \ X \in \{0, 1\}^{n \times k}\\
\end{aligned}
\end{equation}
where $X$ is the assignment matrix. Clearly, problem \eqref{prob:ccMSSC1} is equivalent to problem \eqref{prob:ccMSSC2}. To see this, note that the $j$-th column of $X$ is the characteristic vector of the cluster $j$, i.e, $\pi_j = X_{:,j}$ for all $j \in [k]$, and the $k$ clusters are in one-to-one correspondence with the set of partition matrices
\begin{align}\label{set:ass}
    \mathcal{F} = \left\{ X \in \{0, 1\}^{n \times k} : \ X 1_k = 1_n, \ X^\top 1_n = \diag(C) \right\}.
\end{align}
To obtain the matrix lifting SDP relaxation of problem \eqref{prob:ccMSSC2}, we first linearize the objective function by introducing a new matrix variable $Z = X C^{-1} X^\top$. 
This yields the feasible set
\begin{equation}
    \mathcal{\bar{F}} = \textrm{conv}\left\{ Z \in \mathcal{S}^{n+k} : \ Z = XC^{-1}X^\top, \ X \in \mathcal{F} \right\}.
\end{equation}
Therefore, we can rewrite the problem as
\begin{equation}
\begin{aligned}
\min_{Z \in \mathcal{\bar{F}}} \quad & \textrm{tr}(W - W Z)\\
\end{aligned}
\end{equation}
In order to approximate the set $\mathcal{\bar{F}}$, we relax the integrality constraint on $X$ and we set $X \geq 0_{n \times k}$. Next, we replace the non-convex constraint $Z = X C^{-1} X^\top$ by $Z \succeq X C^{-1} X^\top$ and by using the Schur's complement we obtain the equivalent linear matrix inequality
\begin{equation}
{Y} = \begin{bmatrix}
C & X^\top \\
X & Z
\end{bmatrix} \in \mathcal{S}^{n+k}_+.
\end{equation}
Although constraint $Z = X C^{-1} X^\top$ is relaxed, we can still consider some additional linear constraints to further improve the quality of the solution. 
Since $Z$ approximates $X C^{-1} X^\top$ and $X \geq 0_{n \times k}$, we have $Z \geq 0_{n \times n}$, $\diag(Z) = X \diag(C^{-1})$ and
\begin{align}
    Z 1_n &= X C^{-1} X^\top 1_n = X C^{-1} \textrm{diag}(C) = X 1_k = 1_n.
\end{align}
By collecting all the mentioned constraints, the matrix lifting SDP relaxation is given by
\begin{equation}
\begin{aligned}\label{prob:mlSDP}
f^\star_\textrm{ML} = \min_{(X, Z) \in \mathcal{S}_{\textrm{ML}}} \quad & \tr(W - W Z)
\end{aligned}\tag{ML-SDP}
\end{equation}

where the convex set $\mathcal{S}_{\textrm{ML}} \subset \mathbb{R}^{n \times k} \times \mathcal{S}^n$ is defined as

{\small
\begin{align}
    \mathcal{S_{\textrm{ML}}} = \left\{(X, Z) \in \mathbb{R}^{n \times k} \times \mathcal{S}^n : \begin{array}{l}
    X 1_k = 1_n, \ X^\top 1_n = \diag(C), \\
    Z 1_n = 1_n, \ \diag(Z) = X \diag(C^{-1}), \\
    X \geq 0_{n \times k}, \ Z \geq 0_{n \times n}, \ Z \succeq X C^{-1} X^\top \\
    \end{array} \right\}.
\end{align}}

Similarly to \eqref{prob:vlSDP}, problem \eqref{prob:mlSDP} is a DNN program whose optimal value yields a lower bound on the optimal value of problem \eqref{prob:ccMSSC1}. Moreover, if the
optimal solution $(X^\star, Z^\star)$ is an extreme point of $\mathcal{\bar{F}}$, then we can conclude that $X^\star_{:,j} = \pi^\star_j$ for all $j \in [k]$ is an optimal solution of problem \eqref{prob:ccMSSC1}.

It is interesting to compare problem \eqref{prob:mlSDP} with the Peng-Wei SDP relaxation of unconstrained MSSC in \eqref{prob:SDPpeng}. The matrix $Z$ in problem \eqref{prob:mlSDP} shares the same structure of the one in \eqref{prob:SDPpeng}. This implies that valid inequalities used in \cite{aloise2009branch} and \cite{piccialli2022sos} for strengthening the Peng-Wei bound can also be exploited to tighten our relaxation. 
Finally, we point out that, in contrast to our relaxation, the Peng-Wei SDP does not have access to either the cluster size or the assignment matrix. We will discuss how to exploit the SDP solution to recover feasible clustering solutions in Section \ref{sec:heuristic}.

One special case of interest is the balanced ccMSSC where each cluster contains the same number of data points, i.e., $c_j = \frac{n}{k}$ for all $j \in [k]$. This leads to the Amini-Levina SDP relaxation, which is presented in \cite{amini2018semidefinite} and takes the form
\begin{equation}
\begin{aligned}\label{prob:SDPamini}
f^\star_\textrm{AL} = \min_{} \quad & \tr(W - W Z)\\
\textrm{s.t.} \quad & Z 1_n=1_n, \ \diag(Z)=\frac{k}{n}1_n, \ Z \in \mathcal{S}^n_+, \ Z \geq 0_{n \times n}.
\end{aligned}
\end{equation}
Our \eqref{prob:mlSDP} generalizes the Amini-Levina SDP to the case of unbalanced cluster sizes and it can be shown to be equivalent to problem \eqref{prob:SDPamini} when the cardinalities of all clusters are the same. In order to prove this result, we need the following lemma.
\begin{proposition}
Let $C = \Diag(\frac{n}{k}, \dots, \frac{n}{k})$, then Problems \eqref{prob:mlSDP} and \eqref{prob:SDPamini} are equivalent.
\end{proposition}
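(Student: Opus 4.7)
The plan is to show that in the balanced case the two SDPs share the same feasible set of $Z$-matrices (and identical objective value $\tr(W-WZ)$), so that their optima coincide. Since the objective depends only on $Z$, it suffices to prove that the projection of the feasible set of \eqref{prob:mlSDP} onto the $Z$-coordinate equals the feasible set of \eqref{prob:SDPamini}.

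First I would specialize the constraints of \eqref{prob:mlSDP} to $C=\frac{n}{k}I_k$: we have $C^{-1}=\frac{k}{n}I_k$, so $\diag(C^{-1})=\frac{k}{n}1_k$, and the constraint $\diag(Z)=X\diag(C^{-1})=\frac{k}{n}X1_k$ collapses to $\diag(Z)=\frac{k}{n}1_n$ by using $X1_k=1_n$. The cardinality constraint becomes $X^\top 1_n=\frac{n}{k}1_k$ and the Schur-complement constraint becomes $Z\succeq \frac{k}{n}XX^\top$. The forward inclusion is then immediate: any $(X,Z)$ feasible for \eqref{prob:mlSDP} yields a $Z$ satisfying $Z1_n=1_n$, $\diag(Z)=\frac{k}{n}1_n$, $Z\succeq 0$ and $Z\ge 0$, i.e.\ feasible for \eqref{prob:SDPamini}.

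For the reverse inclusion I would exhibit an explicit lifting. Given $Z$ feasible for \eqref{prob:SDPamini}, choose $X=\frac{1}{k}1_n1_k^\top$. The linear constraints $X1_k=1_n$, $X^\top 1_n=\frac{n}{k}1_k$ and $X\ge 0$ are obvious, and the diagonal constraint $\diag(Z)=\frac{k}{n}1_n=X\diag(C^{-1})$ is already assumed. It only remains to verify the Schur-complement inequality, which in this case reads $Z\succeq \frac{k}{n}XX^\top = \frac{1}{n}1_n1_n^\top$.

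The main obstacle is this last semidefinite inequality. I would argue it from the spectral structure of $Z$: since $Z1_n=1_n$, the vector $1_n$ is an eigenvector of $Z$ with eigenvalue $1$, which is the same eigenpair as for $\frac{1}{n}1_n1_n^\top$; on the orthogonal complement of $1_n$ the matrix $\frac{1}{n}1_n1_n^\top$ vanishes while $Z$ remains positive semidefinite. Consequently $Z-\frac{1}{n}1_n1_n^\top\succeq 0$, which completes the lifting and proves both inclusions. Equality of optimal values then follows because the objective $\tr(W-WZ)$ depends only on $Z$.
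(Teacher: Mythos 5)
Your proof is correct and follows essentially the same route as the paper: the forward direction by projection onto the $Z$-coordinate, and the reverse by lifting with $X=\frac{1}{k}1_n1_k^\top$ and establishing $Z-\frac{1}{n}1_n1_n^\top\succeq 0$ via the eigenpair $(1,1_n)$ of $Z$ (the paper phrases this as a deflation of $Z$). No gaps.
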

\begin{proof}
We show that any feasible solution of problem \eqref{prob:mlSDP} gives rise to a feasible solution of problem \eqref{prob:SDPamini} with the same objective value and vice versa. Let $(X, Z)$ be feasible for \eqref{prob:mlSDP} then it immediately follows that $\bar{Z} = Z$ is feasible for \eqref{prob:SDPamini} and achieves the same objective value. Conversely, let $Z$ be feasible for problem \eqref{prob:SDPamini} then we construct a solution $(\bar{X}, \bar{Z})$ of problem \eqref{prob:mlSDP} with $\bar{Z} = Z$ and $\bar{X} = \frac{1}{k} 1_n 1_k^\top$. Then, we have $\bar{X} 1_k = 1_n$, $\bar{X}^\top 1_n = \diag(C)$ and $\diag(\bar{Z}) = \bar{X} \diag(C^{-1}) = \bar{X} \frac{k}{n} 1_k = \frac{k}{n} 1_n$. Let $M = \bar{Z} - \bar{X}C^{-1}\bar{X}^\top = \bar{Z} - \frac{1}{n} 1_n 1_n^\top$. It remains to show that $ M \succeq 0$. From $\bar{Z}1_n = 1_n$ we have that $1$ is an eigenvalue of $\bar{Z}$ with corresponding eigenvector $1_n$.
The positive semidefiniteness of $M$ follows from $\bar Z$ being positive semidefinite and $M=\bar{Z} - \frac{1}{n} 1_n 1_n^\top$ being a deflation matrix of $\bar Z\succeq 0$ with respect to the eigenvalue-normalized eigenvector pair $\left(1,\frac{1}{\sqrt{n}}1_n\right)$.
\end{proof}

We now prove that our relaxation \eqref{prob:mlSDP} is dominated by \eqref{prob:vlSDP}, the best-known relaxation of the ccMSSC problem. Then, we compare the SDPs numerically on some instances from the literature. We show that the bound provided by problem \eqref{prob:mlSDP} is competitive and the computational effort to compute it is much smaller than the one required to solve problem \eqref{prob:vlSDP}.

\begin{proposition}
We have $f^\star_{\textrm{VL}} \geq f^\star_{\textrm{ML}} \geq f^\star_{\textrm{PW}}$.
\end{proposition}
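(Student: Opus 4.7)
The plan is to prove each inequality by exhibiting an explicit feasibility-preserving, objective-preserving map from the feasible set of the tighter relaxation to that of the looser one. Since all three objectives reduce to the common form $\tr(W) - \ip{W}{Z}$ after identifying the appropriate $Z$, the task amounts to checking constraint satisfaction.

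For $f^\star_{\textrm{VL}} \geq f^\star_{\textrm{ML}}$, I would take an arbitrary feasible point $\{(\pi_j,\Pi_j)\}_{j=1}^k$ of \eqref{prob:vlSDP} and define
\[
X := [\pi_1 \mid \cdots \mid \pi_k] \in \mathbb{R}^{n\times k}, \qquad Z := \sum_{j=1}^k \frac{1}{c_j} \Pi_j \in \mathcal{S}^n.
\]
I would then verify each defining condition of $\mathcal{S}_{\textrm{ML}}$ in turn. The identities $X 1_k = \sum_j \pi_j = 1_n$, $X^\top 1_n = (c_1,\dots,c_k)^\top = \diag(C)$, $Z 1_n = \sum_j \frac{1}{c_j}(\Pi_j 1_n) = \sum_j \pi_j = 1_n$, and $\diag(Z) = \sum_j \frac{1}{c_j}\pi_j = X\diag(C^{-1})$ all follow directly from the linear constraints defining $\mathcal{S}_{\textrm{VL}}(c_j)$, while $X,Z \geq 0$ follows from $\Pi_j \geq 0$ together with $\diag(\Pi_j) = \pi_j$. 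The only nontrivial step is the Schur-complement inequality $Z \succeq X C^{-1} X^\top$, which I would obtain by summing the individual relations $\Pi_j \succeq \pi_j \pi_j^\top$ with the positive weights $1/c_j$, yielding $Z \succeq \sum_j \frac{1}{c_j} \pi_j \pi_j^\top = X C^{-1} X^\top$. Because the \eqref{prob:vlSDP} and \eqref{prob:mlSDP} objectives both evaluate to $\tr(W) - \sum_j \frac{1}{c_j}\ip{W}{\Pi_j}$ on this pair, the inequality follows.

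For $f^\star_{\textrm{ML}} \geq f^\star_{\textrm{PW}}$, I would take any feasible $(X,Z)$ of \eqref{prob:mlSDP} and simply forget $X$, retaining $Z$ as the candidate for \eqref{prob:SDPpeng}. The constraints $Z 1_n = 1_n$, $Z \geq 0$, and $Z \succeq 0$ (inherited from $Z \succeq X C^{-1} X^\top \succeq 0$, since $C \succ 0$) are immediate. The trace condition follows from the diagonal identity:
\[
\tr(Z) = 1_n^\top \diag(Z) = 1_n^\top X \diag(C^{-1}) = (X^\top 1_n)^\top \diag(C^{-1}) = \diag(C)^\top \diag(C^{-1}) = k,
\]
and the objective is already identical as a function of $Z$, so the bound follows.

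No step presents a real obstacle. The only mildly subtle point is the Schur-complement aggregation in the first reduction, which I handle by summing the PSD inequalities $\Pi_j \succeq \pi_j\pi_j^\top$ with positive weights before recognizing the right-hand side as $XC^{-1}X^\top$. Everything else is direct constraint bookkeeping, and the argument uses no assumption on the data geometry or on the cardinalities $c_j$ beyond their positivity.
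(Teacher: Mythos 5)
Your proposal is correct and follows essentially the same route as the paper: the same aggregation $Z=\sum_j \frac{1}{c_j}\Pi_j$, $X=[\pi_1,\dots,\pi_k]$ with $Z-XC^{-1}X^\top=\sum_j\frac{1}{c_j}(\Pi_j-\pi_j\pi_j^\top)\succeq 0$ for the first inequality, and the same projection onto $Z$ with the trace computation $\tr(Z)=1_n^\top X\diag(C^{-1})=k$ for the second. No gaps.
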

\begin{proof}
We first show $f^\star_{\textrm{VL}} \geq f^\star_{\textrm{ML}}$. For any feasible solution $\{(\pi_j, \Pi_j)\}_{j=1}^k$ of problem \eqref{prob:vlSDP}, we can construct a solution $(Z, X)$ with
\begin{align}
    Z = \sum_{j=1}^{k} \frac{1}{c_j} \Pi_j \quad \textrm{and} \quad X = [\pi_1, \dots, \pi_k],
\end{align}
that is feasible for problem \eqref{prob:mlSDP} and achieves the same objective value.
By construction we have $X \geq 0_{n \times n}$ and $Z \geq 0_{n \times n}$ since $\pi_j$ is nonnegative due to $\Pi_j \geq 0_{n \times n}$ and  $\diag(\Pi_j) = \pi_j$ for $j=1,\dots,k$. Similarly, $X 1_k = 1_n$ and $ X^\top 1_n = \diag(C)$ hold by construction. Then, we have
\begin{align}
    Z 1_n &= \sum_{j=1}^{k} \frac{1}{c_j} \Pi_j 1_n = \sum_{j=1}^{k} \frac{1}{c_j} c_j \pi_j = \sum_{j=1}^{k} \pi_j = 1_n, \\
      \diag(Z) &= \diag \big(\sum_{j=1}^{k} \frac{1}{c_j} \Pi_j \big) = \sum_{j=1}^{k} \frac{1}{c_j} \diag(\Pi_j) = \sum_{j=1}^{k} \frac{1}{c_j} \pi_j = X \diag(C^{-1}).
\end{align}
Using the positive semidefiniteness of $\Pi_j - \pi_j (\pi_j)^\top$ we have
\begin{align}
    Z - X C^{-1} X^\top & = \sum_{j=1}^k \frac{1}{c_j} \Pi_j - [\pi_1, \dots, \pi_k] C^{-1} [\pi_1, \dots, \pi_k]^\top \\
    & = \sum_{j=1}^k \frac{1}{c_j} \Pi_j - \sum_{j=1}^k \frac{1}{c_j} \pi_j (\pi_j)^\top = \sum_{j=1}^k \frac{1}{c_j} (\Pi_j - \pi_j (\pi_j)^\top) \succeq 0.
\end{align}
{Clearly, the objective function values coincide}.
Finally, we have to show that $f^\star_{\textrm{ML}} \geq f^\star_{\textrm{PW}}$. For any feasible solution $(Z, X)$ of problem \eqref{prob:mlSDP} we obtain a feasible solution $\bar{Z}$ of problem \eqref{prob:SDPpeng} by setting $\bar{Z} = Z$. Clearly, $\bar{Z} \geq 0_{n \times n}$ and $\bar{Z}1_n = 1_n$ hold by construction. Then, $\bar{Z} \succeq 0$ since $Z \succeq X C^{-1} X^\top$. Furthermore, constraints $\diag(Z) = X \diag(C^{-1})$ and $X^\top 1_n = \diag(C)$ imply
\begin{align}
    \tr(\bar{Z}) = 1_n^\top \diag(Z) = 1_n^\top X \diag(C^{-1}) = \diag(C) \diag(C^{-1}) = k
\end{align}
and hence $\bar{Z}$ is feasible for problem \eqref{prob:SDPpeng}.
\end{proof} 

To compare the performance of the two relaxations, we select small-scale benchmark instances from the constrained clustering literature \cite{rujeerapaiboon2019size, haouas2020exact} that are described in Section \ref{sec:results} (see Table \ref{tab:instances}). We solve both the SDP relaxations by using MOSEK interior-point optimizer \cite{mosek}. {We set a default tolerance of $10^{-8}$ for primal feasibility, dual feasibility, and relative gap.}
In Table \ref{tab:mosek}, along with the number of data points $n$ and the target number of cluster $k$, we report the lower bound (LB) provided by the relaxation \eqref{prob:vlSDP}, the relaxation \eqref{prob:mlSDP} and the computational time in seconds. 
\begin{table}[!ht]
    \centering
\begin{tabular}{|lcc|cc|cc|}
\hline
& & & \multicolumn{2}{c|}{VL-SDP} & \multicolumn{2}{c|}{ML-SDP}\\
\multicolumn{1}{|l}{Dataset} & $n$ & $k$ & LB &  Time [s] &  LB &  Time [s] \\
\hline
          Ruspini & 75 & 4 &    1.2881e+04 &        8.09 &     1.2881e+04 &        2.92 \\
     BreastTissue &  106 & 6 &    2.3710e+10 &        240.86 &     2.3709e+10 &        49.67 \\
Hierarchical & 118 & 4 &    7.3973e+06 &        194.13 &     7.3668e+06 &        49.84 \\
             Iris &   150 & 3 &      8.1278e+01 &        499.51 &         8.1278e+01 &        145.09 \\
     HapticsSmall &  155 & 5 &   1.7759e+04 &        1176.5 &     1.7622e+04 &        225.77 \\
            UrbanLand &  168 & 9 &   3.4317e+09 &        4226.19 &     3.4185e+09 &        495.64 \\
             Wine &  178 & 3 &   2.3983e+06 &        1202.17 &     2.3853e+06 &        438.14 \\
        Parkinson & 195 & 2 &    1.3641e+06 &        3286.55 &     1.3399e+06 &        723.83 \\
\hline
\end{tabular}
    \caption{Comparison between VL-SDP and ML-SDP bounds on small-scale instances using MOSEK. The cluster sizes are set according to the ground-truth class labels.}
    \label{tab:mosek}
\end{table}
The results show that our matrix lifting SDP relaxation provides competitive bounds and is solved significantly faster than the existing vector lifting SDP relaxation. Besides this, the results show that, on some instances, the relaxations provide bounds that are close, and for some problems even equal. Due to the mentioned quality of the new relaxation, we believe that it is suitable for implementation within a branch-and-cut framework. {Since SDP solvers can be inaccurate, we describe how to derive valid lower bounds for both SDPs so that they can be safely used within a branch-and-cut framework.}


\section{Valid Lower Bounds} \label{sec:sdp_safe}
Current successful solution techniques based on B\&B methods rely on obtaining strong and inexpensive bounds.
Computational results in Table \ref{tab:mosek} show that, regardless of the relaxation, off-the-shelf interior-point methods (IPMs) are inefficient even for small-sized clustering instances. Thus, for the efficiency of the B\&B we need algorithms that can solve large-scale SDPs. Compared to IPMs \cite{alizadeh1995interior}, first-order SDP solvers based on augmented Lagrangian method (ALM) or alternating direction method of multipliers (ADMM) can scale to significantly larger problem sizes, while trading off the accuracy of the resulting output \cite{wen2010alternating, sun2015convergent, yang2015sdpnal}.
When using first-order methods, it is hard to reach a solution to high precision in a reasonable amount of time. To safely use the proposed lower bounds within the B\&B algorithm, we need a certificate that the optimal value of the primal SDP is indeed a valid lower bound for the discrete optimization problem. However, since we only approximately solve the primal-dual pair of SDPs to some precision, feasibility is not necessarily reached when the algorithm terminates. In the following, we consider post-processing methods to guarantee safe lower bounds for our SDPs. 
Following the ideas developed in \cite{jansson2008rigorous} and \cite{cerulli2021improving}, we use two methods: one adding a negative perturbation to the dual objective function value (error bounds) and one that generates a dual feasible solution, and hence a bound, by solving a linear program. Both methods are computationally cheap and produce bounds close to the optimal objective function value. However, since in our experiments there is no method that systematically outperforms the other on all the considered instances, we choose to run both of them and we use the best (largest) bound.


Consider Lagrange multipliers $y \in \mathbb{R}^n$, $\alpha_j, v_j \in \mathbb{R}$, $\beta_j, \gamma_j, u_j \in \mathbb{R}^n$, $V_j \in \mathcal{S}^{n}$, $V_j \geq 0$, $U_j \in \mathcal{S}^{n+1} \ \forall j \in [k]$. The dual of problem \eqref{prob:vlSDP} is
{\small
\begin{maxi}[2]
{}{y^\top 1_n + \sum_{j=1}^k \alpha_j c_j - \sum_{j=1}^{k} v_j + {\ip{W}{I_n}}}
    {\label{prob:dualvlSDP}}{}
\addConstraint{-y - \alpha_j 1_n + \beta_j + c_j \gamma_j - 2 u_j}{= 0_n}{\quad \forall j \in [k]}
\addConstraint{{-\frac{1}{c_j}W} - \Diag(\beta_j) - \frac{1}{2} 1_n (\gamma_j)^\top - \frac{1}{2} \gamma_j 1_n^\top - S_j}{= V_j}{\quad \forall j \in [k]}
\addConstraint{U_j = \begin{bmatrix}
v_j & (u_j)^\top \\
u_j & S_j
\end{bmatrix}}{\in \mathcal{S}_{+}^{n+1}, \ V_j \geq 0_{n \times n}}{\quad \forall j \in [k]}
\end{maxi}}

Consider Lagrange multipliers $y_1, \alpha_1, \alpha_2 \in \mathbb{R}^n$, $y_2 \in \mathbb{R}^k$, $U \in \mathbb{R}^{n \times k}$, $U \geq 0$, $V \in \mathcal{S}^n$, $V \geq 0$, $S \in \mathcal{S}^{n+k}$. The dual of problem \eqref{prob:mlSDP} is
{\small
\begin{maxi}[2]
{}{y_1^\top 1_n + y_2^\top \diag(C) + \alpha_1^\top 1_n - \ip{C}{S_{11}} + {\ip{W}{I_n}}}
    {\label{prob:dualmlSDP}}{}
\addConstraint{- y_1 1_k^\top - 1_n y_2^\top + \alpha_2 \diag(C^{-1})^\top - 2S_{12}}{= U}
\addConstraint{{-W} - \frac{1}{2}\alpha_1 1_n^\top - \frac{1}{2} 1_n \alpha_1^\top - \Diag(\alpha_2) - S_{22}}{= V}
\addConstraint{S = \begin{bmatrix}
S_{11} & S_{12}^\top \\
S_{12} & S_{22}
\end{bmatrix} \in \mathcal{S}^{n+k}_+, }{\ U \geq 0_{n \times k}, \ {V \geq 0_{n \times n}}.}{}
\end{maxi}}

 We use the dual problems in both post-processing techniques described in the next subsections. The idea of post-processing via error bounds is the following. If the dual feasibility is reached within machine accuracy, then the dual objective function is already a valid lower bound. Otherwise, the dual objective value is perturbed by adding a negative term to keep into account the infeasibility. This perturbation should be as small as possible.
 Theorems \ref{theorem:pp_vl} and \ref{theorem:pp_ml} compute the tailored safe underestimate of the dual objective function for problems \eqref{prob:vlSDP} and \eqref{prob:mlSDP}, respectively. The following lemma is needed for proving the validity of the error bounds.
\begin{lemma}[Lemma 3.1 in ref. \cite{jansson2008rigorous}]\label{lem:jansson}
Let $S, X \in \mathcal{S}^n$ be matrices that satisfy
$0 \leq \lambda_{\min}(X)$
 and $\lambda_{\max}(X) \leq \bar{x}$ for some $\bar{x} \in \mathbb{R}$.
Then the following inequality holds:
\begin{equation*}
    \ip{S}{X} \geq \bar{x}\sum_{i \colon  \lambda_i(S) <0}\lambda_i(S).
\end{equation*}
\end{lemma}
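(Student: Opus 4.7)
The plan is to leverage the spectral decomposition of $S$ together with the two-sided bound on $X$, so that each eigenvalue of $S$ interacts with $X$ through a controllable scalar weight. First I would write $S=\sum_{i=1}^{n}\lambda_i(S)\, q_i q_i^\top$, where $\{q_i\}_{i=1}^n$ is an orthonormal basis of eigenvectors of $S$. Substituting into the trace inner product gives
\begin{equation*}
\ip{S}{X} \;=\; \sum_{i=1}^{n}\lambda_i(S)\, q_i^\top X q_i,
\end{equation*}
which reduces the matrix-level inequality to a scalar inequality that can be handled termwise.

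The key observation is that the hypotheses $X\succeq 0$ and $\lambda_{\max}(X)\leq \bar{x}$ translate, via the Rayleigh quotient characterization, into the pointwise bound $0 \leq q_i^\top X q_i \leq \bar{x}$ for every unit eigenvector $q_i$. Hence each term $\lambda_i(S)\, q_i^\top X q_i$ is a product of a sign-unrestricted eigenvalue and a nonnegative scalar confined to $[0,\bar{x}]$, which is exactly the structure one needs to control the sign-indefinite trace on the left-hand side by the negative spectrum of $S$ alone.

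Then I would split the sum according to the sign of $\lambda_i(S)$. For indices with $\lambda_i(S)\geq 0$, the corresponding term is nonnegative and can be discarded from a lower bound. For indices with $\lambda_i(S)<0$, multiplying the scalar inequality $q_i^\top X q_i \leq \bar{x}$ by the negative number $\lambda_i(S)$ reverses the inequality, yielding $\lambda_i(S)\, q_i^\top X q_i \geq \lambda_i(S)\, \bar{x}$. Summing these termwise bounds produces
\begin{equation*}
\ip{S}{X} \;\geq\; \sum_{i\colon \lambda_i(S)<0} \lambda_i(S)\, \bar{x} \;=\; \bar{x}\sum_{i\colon \lambda_i(S)<0} \lambda_i(S),
\end{equation*}
which is the claimed inequality. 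The only subtle point, and the one place where a careless reader could slip, is the direction-flipping when multiplying by a negative eigenvalue; beyond this sign bookkeeping, the argument is a direct consequence of the spectral decomposition and the two-sided bound on the Rayleigh quotient of $X$.
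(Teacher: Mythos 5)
Your proof is correct, and it follows essentially the same argument as the cited source (the paper itself states this lemma without proof, deferring to Jansson et al., whose proof likewise expands $\ip{S}{X}=\sum_i\lambda_i(S)\,q_i^\top X q_i$ via the spectral decomposition of $S$ and bounds each Rayleigh quotient term by $0\leq q_i^\top X q_i\leq\bar{x}$). The sign-splitting and the inequality reversal for negative eigenvalues are handled correctly.
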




\begin{theorem}
\label{theorem:pp_vl}
{Let $p^*$ be the optimal objective function value of \eqref{prob:vlSDP}. Given the dual variables $y \in \mathbb{R}^n$, $\alpha_j, v_j \in \mathbb{R}$, $\beta_j, \gamma_j \in \mathbb{R}^n$, $V_j \in \mathcal{S}^{n}$, $V_j \geq 0$, set
\begin{align*}
    U_j = \begin{bmatrix}
v_j & (u_j)^\top \\
u_j & S_j
\end{bmatrix}, \quad u_j &= \frac{1}{2}\left(-y - \alpha_j 1_n + \beta_j + c_j \gamma_j\right), \\
S_j &= -\frac{1}{c_j}W - \Diag(\beta_j) - \frac{1}{2} 1_n (\gamma_j)^\top - \frac{1}{2} \gamma_j 1_n^\top - V_j,
\end{align*}
for all $j \in [k]$}. A safe lower bound for $p^\star$ is given by
\begin{equation*}
    lb = {\ip{W}{I_n}} + y^\top 1_n + \sum_{j=1}^k \alpha_j c_j - \sum_{j=1}^{k} v_j + \sum_{j=1}^{k} \Big( \left(c_j+1\right) \sum_{i\colon \lambda_i(U_j) < 0} \lambda_i(U_j) \Big).
\end{equation*}
\end{theorem}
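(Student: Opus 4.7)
The plan is to exploit weak duality applied to the constructed dual point, which satisfies all linear equality constraints and the nonnegativity of $V_j$ by construction, but may violate the PSD constraint on $U_j$. The discrepancy caused by the PSD violation will be controlled through Lemma~\ref{lem:jansson} using the fact that, on any primal feasible point, the lifted Schur--complement matrix
$Y_j = \begin{bmatrix} 1 & \pi_j^\top \\ \pi_j & \Pi_j\end{bmatrix}$
is PSD with trace equal to $1 + c_j$.

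First I would take an arbitrary primal feasible $\{(\pi_j,\Pi_j)\}_{j=1}^k$ and compute the difference between the primal and the dual objective value at the constructed dual point. Using the second dual equality to substitute $-\frac{1}{c_j} W$ in the expression $-\sum_j \frac{1}{c_j}\ip{W}{\Pi_j}$, and using the primal identities $\diag(\Pi_j) = \pi_j$, $\Pi_j 1_n = c_j \pi_j$, $\sum_j \pi_j = 1_n$, I would collect the resulting scalar products. The first dual equality lets me replace $\beta_j + c_j \gamma_j$ by $2u_j + y + \alpha_j 1_n$, and the block structure of $Y_j$ yields the clean identity
\begin{equation*}
2u_j^\top \pi_j + \ip{S_j}{\Pi_j} = \ip{U_j}{Y_j} - v_j.
\end{equation*}
After cancellations this gives
\begin{equation*}
p_{\mathrm{primal}} - d_{\mathrm{dual}} = \sum_{j=1}^k \ip{V_j}{\Pi_j} + \sum_{j=1}^k \ip{U_j}{Y_j},
\end{equation*}
where $d_{\mathrm{dual}} = \ip{W}{I_n} + y^\top 1_n + \sum_j \alpha_j c_j - \sum_j v_j$.

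Next I would bound the two sums from below. Since $V_j \geq 0$ and, at any primal feasible point, $\Pi_j \geq 0$, we have $\ip{V_j}{\Pi_j}\geq 0$. For the second sum I would invoke Lemma~\ref{lem:jansson} with $S = U_j$ and $X = Y_j$: the Schur complement condition $\Pi_j \succeq \pi_j\pi_j^\top$ is precisely $Y_j \succeq 0$, and $\tr(Y_j) = 1 + 1_n^\top \diag(\Pi_j) = 1 + 1_n^\top \pi_j = 1 + c_j$, so $0 \leq \lambda_{\min}(Y_j)$ and $\lambda_{\max}(Y_j) \leq c_j + 1$. Lemma~\ref{lem:jansson} then yields
\begin{equation*}
\ip{U_j}{Y_j} \geq (c_j+1)\sum_{i:\lambda_i(U_j)<0}\lambda_i(U_j).
\end{equation*}

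Combining these inequalities and taking the infimum of $p_{\mathrm{primal}}$ over primal feasible points delivers the claimed safe underestimate of $p^\star$. The only subtle step is the algebraic repackaging of the duality gap into the $\ip{U_j}{Y_j}$ form; the rest follows mechanically once one observes that the block trace $\tr(Y_j)=c_j+1$ furnishes the exact eigenvalue bound needed by Jansson's lemma.
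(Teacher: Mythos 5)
Your proof is correct and follows essentially the same route as the paper: express the difference between the primal objective at a feasible point and the constructed dual value as $\sum_{j}\ip{V_j}{\Pi_j}+\sum_{j}\ip{U_j}{Y_j}$, drop the first sum by nonnegativity, and bound the second via Lemma~\ref{lem:jansson}. The only (minor) difference is that you justify $\lambda_{\max}(Y_j)\le c_j+1$ from $Y_j\succeq 0$ and $\tr(Y_j)=1+1_n^\top\pi_j=c_j+1$, whereas the paper bounds the largest eigenvalue of $\Pi_j$ by its largest row sum $c_j$; your trace argument is, if anything, the cleaner of the two.
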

\begin{proof}
{Let $Y_j^\star = \begin{bmatrix}
1 & (\pi_j^\star)^\top \\
\pi_j^\star & \Pi_j^\star
\end{bmatrix}$ for all $j \in [k]$ be an optimal solution of \eqref{prob:vlSDP} with objective function value $p^\star$.}
In order to show that $p^\star \ge lb$, consider the equation
{\footnotesize
\begin{align*}
   & \quad \ {\ip{W}{I_n - \sum_{j=1}^k \frac{1}{c_j} \Pi_j^\star}} - \left(y^\top 1_n + \sum_{j=1}^k \alpha_j c_j + \sum_{j=1}^k (\beta_j)^\top 0_n + \sum_{j=1}^k (\gamma_j)^\top 0_n - \sum_{j=1}^{k} v_j + {\ip{W}{I_n}}\right) \\
   & = {-\ip{W}{\sum_{j=1}^k \frac{1}{c_j} \Pi_j^\star}} - y^\top \left(\sum_{j=1}^k \pi_j^\star\right) - \sum_{j=1}^k \alpha_j (1_n^\top \pi_j^\star) - \sum_{j=1}^k (\beta_j)^\top (\diag(\Pi_j^\star) - \pi_j^\star) \\
   & \quad - \sum_{j=1}^k (\gamma_j)^\top \left(\frac{1}{2}(\Pi_j^\star + (\Pi_j^\star)^\top) 1_n - c_j \pi_j^\star \right) + \sum_{j=1}^{k} v_j \\
   & = {\sum_{j=1}^k -\frac{1}{c_j} \ip{W}{\Pi_j^\star} }- \sum_{j=1}^k y^\top \pi_j^\star - \sum_{j=1}^k \alpha_j 1_n^\top \pi_j^\star - \ip{\sum_{j=1}^k \Diag(\beta_j)}{\Pi_j^\star} + \sum_{j=1}^k (\beta_j)^\top \pi_j^\star \\
   & \quad - \ip{\frac{1}{2} \sum_{j=1}^k 1_n (\gamma_j)^\top}{\Pi_j^\star} - \ip{\frac{1}{2} \sum_{j=1}^k \gamma_j 1_n^\top}{\Pi_j^\star} + \sum_{j=1}^k (\gamma_j)^\top c_j \pi_j^\star + \sum_{j=1}^{k} v_j \\
   & = \sum_{j=1}^k \ip{{-\frac{1}{ c_j} W }- \Diag(\beta_j) - \frac{1}{2} 1_n (\gamma_j)^\top - \frac{1}{2} \gamma_j 1_n^\top}{\Pi_j^\star} + \sum_{j=1}^k \left(- y - \alpha_j 1_n + \beta_j + \gamma_j c_j\right)^\top \pi_j^\star + \sum_{j=1}^k v_j \\
   & = \sum_{j=1}^k \ip{V_j}{\Pi_j^\star} + \sum_{j=1}^k \ip{S_j}{\Pi_j^\star} + 2 \sum_{j=1}^k (u_j)^\top \pi_j^\star + \sum_{j=1}^k v_j = \sum_{j=1}^k \ip{V_j}{\Pi_j^\star} + \sum_{j=1}^k \ip{U_j}{Y^\star_j}.
\end{align*}}
{The last term can be bounded by means of Lemma \ref{lem:jansson} applied on matrix $Y^\star_j$, so that we need an upper bound $\bar{y}_j$  on $\lambda_{\max}(Y^\star_j)$. The largest eigenvalue of $\Pi_j$ is lower or equal than its largest row sum, which is the cardinality of cluster $j$. Due to $\Pi_j 1_n = c_j \pi_j$ and $\pi_j \in [0, 1]^n$ we set $\bar{y}_j = c_j + 1$.  Therefore, using Lemma \ref{lem:jansson} with $\bar{y}_j = c_j+1$, and the nonnegativity of $V_j$ for all $j \in [k]$, we obtain
{\small
\begin{align*}
 \sum_{j=1}^k \ip{V_j}{\Pi_j^\star} + \sum_{j=1}^k \ip{U_j}{Y^\star_j} \geq \sum_{j=1}^k \Big(\left(c_j+1\right) \sum_{i\colon \lambda_i(U_j) < 0} \lambda_i(U_j) \Big).
\end{align*}
}}
\end{proof}
\noindent

In order to state an analogous result for problem \eqref{prob:mlSDP}, we need an intermediate result, bounding the eigenvalues of its feasible solutions.

\begin{lemma}[ref. \cite{BOURIN20121906}]\label{lemma:blockpsd}
For every block matrix $M = {\begin{bmatrix}
M_{11} & M_{12}^\top \\
M_{12} & M_{22}
\end{bmatrix}} \succeq 0$ we have the decomposition
\begin{equation*}
{M = 
U \begin{bmatrix}
M_{11} & 0 \\
0 & 0
\end{bmatrix} U^\top +
V \begin{bmatrix}
0 & 0 \\
0 & M_{22}
\end{bmatrix} V^\top}
\end{equation*}
{for some orthogonal matrices $U, V$.}
\end{lemma}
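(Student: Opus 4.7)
The plan is to prove the decomposition by writing $M$ as a Gram matrix and constructing the orthogonal factors $U, V$ via polar decompositions of the resulting rectangular blocks, following the strategy of Bourin and Lee.

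First, I would use $M \succeq 0$ to factor $M = R^{\top}R$ with $R \in \mathbb{R}^{r\times(n+k)}$ and $r=\textrm{rank}(M)$, and split the columns of $R$ compatibly with the block structure as $R = [R_1, R_2]$, so that $M_{11} = R_1^{\top}R_1$, $M_{22} = R_2^{\top}R_2$ and $M_{12} = R_2^{\top}R_1$. The elementary identities
\[
\begin{bmatrix} M_{11} & 0 \\ 0 & 0 \end{bmatrix} = [R_1,\,0]^{\top}[R_1,\,0], \qquad \begin{bmatrix} 0 & 0 \\ 0 & M_{22} \end{bmatrix} = [0,\,R_2]^{\top}[0,\,R_2]
\]
then reduce the sought identity $M = U\,\textrm{diag}(M_{11},0)\,U^{\top} + V\,\textrm{diag}(0,M_{22})\,V^{\top}$ to finding orthogonal $U, V \in \mathbb{R}^{(n+k)\times(n+k)}$ for which $R^{\top}R = P^{\top}P + Q^{\top}Q$, where $P := [R_1, 0]\,U^{\top}$ and $Q := [0, R_2]\,V^{\top}$. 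Stacking $P$ over $Q$, this is in turn equivalent to the existence of a $2r \times r$ column-isometry $W = \bigl[\,W_1\,;\,W_2\,\bigr]$ with $W_1^{\top}W_1 + W_2^{\top}W_2 = I_r$, and $P = W_1 R$, $Q = W_2 R$.

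Next, I would construct $W_1, W_2, U, V$ via the polar decompositions $R_i = S_i |R_i|$, where $|R_i| := (R_i^{\top}R_i)^{1/2}$ and $S_i$ is a partial isometry with range inside $\textrm{range}(R)\subseteq \mathbb{R}^r$. A natural candidate is $W_i := S_i S_i^{\top}$, the orthogonal projector onto $\textrm{range}(R_i)$: indeed, $W_i R$ then lies in the column space of $[R_i,\,*]$, and once this is arranged the required orthogonal $U$ (resp.\ $V$) is obtained by extending the column-matching partial isometry on $\mathbb{R}^{n+k}$ to a full orthogonal matrix, which is always possible in finite dimension.

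The main obstacle is verifying the simultaneous isometry condition $W_1^{\top}W_1 + W_2^{\top}W_2 = I_r$ under these choices: in general the projectors onto $\textrm{range}(R_1)$ and $\textrm{range}(R_2)$ do not sum to the identity on $\textrm{range}(R)$. To fix this I would exploit the unitary freedom in the polar decomposition to rotate $S_1, S_2$ so that their images become complementary subspaces summing to $\textrm{range}(R)$. It is precisely here that the positive semidefiniteness of the full block matrix $M$ (and not merely of its diagonal blocks) enters: a Schur-complement bound on $M_{12} = R_2^{\top}R_1$ provides exactly the dimensional slack in $\mathbb{R}^r$ needed to carry out this rotation. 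Once the rotated $S_1, S_2$ are in place, the decomposition follows by direct substitution into $P^{\top}P + Q^{\top}Q = R^{\top}R = M$.
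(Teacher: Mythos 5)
Your reduction in the first half is sound: factoring $M=R^\top R$ with $R=[R_1,R_2]$ and observing that the claim is equivalent to writing $M=P^\top P+Q^\top Q$ with $P^\top P$ orthogonally similar to $\mathrm{Diag}(M_{11},0)$ and $Q^\top Q$ to $\mathrm{Diag}(0,M_{22})$, i.e.\ to finding $W_1,W_2$ with $W_1^\top W_1+W_2^\top W_2=I_r$ and the right spectra for $W_iRR^\top W_i^\top$, is correct. The gap is in the second half, where the actual construction should be: your candidate $W_i=S_iS_i^\top$ (the orthogonal projector onto $\mathrm{range}(R_i)$) fails both requirements --- the two projectors do not sum to $I_r$, and $\Pi_1(R_1R_1^\top+R_2R_2^\top)\Pi_1$ does not have the nonzero spectrum of $R_1R_1^\top$ --- and the proposed repair (``rotate $S_1,S_2$ so their images become complementary, using a Schur-complement bound on $M_{12}$ for dimensional slack'') is not carried out and would not obviously preserve the spectral condition even if the images were made complementary. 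As written, the step that produces $W_1,W_2$ is exactly the content of the lemma, so the argument begs the question there. (For the record, the plan is completable: with $G=RR^\top\succ 0$ one can take $W_i=(G^{-1/2}R_iR_i^\top G^{-1/2})^{1/2}$, which sums correctly since $R_1R_1^\top+R_2R_2^\top=G$, and $W_iGW_i^\top$ then has the nonzero spectrum of $R_iR_i^\top$ because $AA^\top$ and $A^\top A$ are cospectral; but none of this appears in your sketch. Also, positive semidefiniteness of the full block matrix enters only through the existence of the factorization $M=R^\top R$; no Schur-complement argument is needed.)

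Note that the paper itself does not prove this lemma --- it quotes it from Bourin and Lee --- and the standard proof is much shorter than your route. Take $N=M^{1/2}$, the \emph{symmetric} square root, and partition its columns as $N=[S\;\,T]$ compatibly with the blocks. Then $N^\top N=M$ gives $S^\top S=M_{11}$ and $T^\top T=M_{22}$, while $NN^\top=SS^\top+TT^\top$; since $N$ is symmetric these two products coincide, so $M=SS^\top+TT^\top$. Finally $SS^\top$ and the padded matrix $\mathrm{Diag}(M_{11},0)$ are symmetric of the same size with the same eigenvalues (the nonzero spectrum of $S^\top S$), hence orthogonally similar, and likewise for $TT^\top$ and $\mathrm{Diag}(0,M_{22})$. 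The choice of the symmetric square root is precisely what removes the need for the isometry-matching step that your proposal leaves open.
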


\begin{theorem}\label{theorem:eigboundY}
{Let $Y = \begin{bmatrix}
C & X^\top \\
X & Z
\end{bmatrix}$} be a feasible solution of problem \eqref{prob:mlSDP}, then 
\begin{equation*}
   \max_{i=1,\dots, k} C_{ii} \leq \lambda_{\max}(Y) \leq \max_{i=1,\dots, k} C_{ii} + 1
\end{equation*}
\end{theorem}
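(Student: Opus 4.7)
The plan is to handle the two inequalities separately, getting the lower bound from an interlacing argument on the principal submatrix $C$ and the upper bound from the Bourin decomposition stated in Lemma \ref{lemma:blockpsd} combined with a row-sum bound on $Z$.

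For the lower bound, the feasibility of $Y$ requires $Y \succeq 0$, so $Y$ is symmetric PSD. The $k \times k$ top-left block $C$ is a principal submatrix of $Y$, hence by the Cauchy interlacing theorem $\lambda_{\max}(C) \leq \lambda_{\max}(Y)$. Since $C = \Diag(c_1,\ldots,c_k)$ is diagonal, $\lambda_{\max}(C) = \max_{i} C_{ii}$, giving the first inequality. This step is essentially immediate.

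For the upper bound, I would apply Lemma \ref{lemma:blockpsd} to $Y$ to write
\begin{equation*}
Y = U \begin{bmatrix} C & 0 \\ 0 & 0 \end{bmatrix} U^\top + V \begin{bmatrix} 0 & 0 \\ 0 & Z \end{bmatrix} V^\top
\end{equation*}
for some orthogonal $U, V$. Weyl's inequality for the sum of symmetric matrices then yields
\begin{equation*}
\lambda_{\max}(Y) \leq \lambda_{\max}(C) + \lambda_{\max}(Z) = \max_i C_{ii} + \lambda_{\max}(Z),
\end{equation*}
since conjugation by an orthogonal matrix preserves the spectrum and padding with zero rows and columns does not introduce a larger eigenvalue (the $c_i$ being positive). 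So the claim reduces to showing $\lambda_{\max}(Z) \leq 1$.

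The bound $\lambda_{\max}(Z) \le 1$ follows from the constraints defining $\mathcal{S}_{\mathrm{ML}}$. The matrix $Z$ is symmetric (since $Z \in \mathcal{S}^n$) and entrywise nonnegative, and the constraint $Z 1_n = 1_n$ says every row sums to $1$. For a nonnegative matrix the spectral radius is bounded by the maximum row sum, so $\rho(Z) \leq 1$; moreover $Z \succeq X C^{-1} X^\top \succeq 0$, so $\lambda_{\max}(Z) = \rho(Z) \leq 1$. Combining gives the upper bound. The only step requiring care is the application of the Bourin decomposition in the right direction (upper bound via Weyl, rather than the two-sided inequalities one sometimes states for eigenvalues of PSD block matrices), but this is routine once the decomposition is in hand.
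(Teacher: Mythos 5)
Your proof is correct and follows essentially the same route as the paper: the upper bound via the Bourin decomposition of Lemma \ref{lemma:blockpsd} plus subadditivity of the top eigenvalue and the stochasticity of $Z$ (row sums $1$, nonnegative entries), and the lower bound from the diagonal entries of the PSD matrix $Y$ (the paper uses $e_i^\top Y e_i \le \lambda_{\max}(Y)$, which is the same fact your interlacing argument delivers). No gaps.
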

\begin{proof}
Let $C_0 = 
\begin{bmatrix}
C & 0 \\
0 & 0
\end{bmatrix}$ and $Z_0 = 
\begin{bmatrix}
0 & 0 \\
0 & Z
\end{bmatrix}$. From Lemma \ref{lemma:blockpsd} we have 
\begin{equation*}
Y = 
U C_0 U^\top +
V Z_0 V^\top,
\end{equation*}
where $U$ and $V$ are orthogonal. Therefore
\begin{align*}
\lambda_{\max}(Y) = \max_{\|v\|=1} v^\top Y v &= \max_{\|v\|=1} v^\top \big(U C_0 U^\top +
V Z_0 V^\top \big) v \\
&\leq \max_{\|v\|=1} v^\top \big(U C_0 U^\top \big) v + \max_{\|v\|=1} v^\top \big(
V Z_0 V^\top \big) v \\
&= \lambda_{\max}\big( U C_0 U^\top\big) + \lambda_{\max}\big( V Z_0 V^\top\big) \\
&= \lambda_{\max}\big(C_0 \big) + \lambda_{\max}\big( Z_0 \big) \\
&= \lambda_{\max}(C) + \lambda_{\max}(Z) = \max_{i=1,\dots,k} C_{ii} + 1,
\end{align*}
where the last two equations derive from the orthogonality of $U$ and $V$ and $Z$ being a stochastic matrix. 
Finally,
\begin{align*}
    \lambda_{\max}(Y) = \max_{\|v\|=1} v^\top Y v \geq e_i^\top Y e_i = y_{ii} \quad \forall i \in \{1, \dots, n+k\},
\end{align*}
where $e_i$ is the $i$-th basis vector. Therefore, $\lambda_{\max}(Y) \geq \max_{i=1,\dots,k} C_{ii}$.
\end{proof}

\noindent

\noindent
Similarly to the vector lifting relaxation, once the SDP has been solved approximately, the following theorem gives a lower bound on the optimal value of the matrix lifting SDP relaxation.
\begin{theorem}
\label{theorem:pp_ml}
{Let $p^\star$ be the optimal objective function value of problem \eqref{prob:mlSDP}.Given the dual variables $y_1, \alpha_1, \alpha_2 \in \mathbb{R}^n$, $y_2 \in \mathbb{R}^k$, $U \in \mathbb{R}^{n \times k}$, $U \geq 0$, $V \in \mathcal{S}^n$, $V \geq 0$, $S_{11} \in \mathbb{R}^{k \times k}$ set
\begin{align*}
    S = \begin{bmatrix}
S_{11} & S_{12}^\top \\
S_{12} & S_{22}
\end{bmatrix}, \quad S_{12} &= \frac{1}{2}\left(- y_1 1_k^\top - 1_n y_2^\top + \alpha_2 \diag(C^{-1})^\top - U \right), \\
S_{22} &= -W - \frac{1}{2}\alpha_1 1_n^\top - \frac{1}{2} 1_n \alpha_1^\top - \Diag(\alpha_2) - V.
\end{align*}} A safe lower bound for the optimal value $p^\star$ is given by
\begin{equation*}
    lb = {\ip{W}{I_n}} + y_1^\top 1_n + y_2^\top \diag(C) + \alpha_1^\top 1_n - \ip{C}{S_{11}} + \left(\max_{i=1, \dots, k} C_{ii} + 1 \right)\sum_{i\colon \lambda_i(S) < 0} \lambda_i(S).
\end{equation*}
\end{theorem}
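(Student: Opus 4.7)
The plan is to mirror the proof strategy used in Theorem \ref{theorem:pp_vl}, exploiting weak duality while correcting for dual infeasibility via a perturbation derived from the negative eigenvalues of $S$. Let $(X^\star, Z^\star)$ be an optimal primal solution of \eqref{prob:mlSDP} with value $p^\star$, and assemble $Y^\star = \begin{bmatrix} C & (X^\star)^\top \\ X^\star & Z^\star \end{bmatrix} \succeq 0$, so that the $\ip{C}{S_{11}}$ correction in the stated $lb$ will naturally account for the fact that the top-left block of $Y^\star$ is the constant $C$, not a decision block.

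First, I would expand the quantity $p^\star - (\ip{W}{I_n} + y_1^\top 1_n + y_2^\top \diag(C) + \alpha_1^\top 1_n - \ip{C}{S_{11}})$ and then inject zero using the primal feasibility constraints of \eqref{prob:mlSDP}, namely $X^\star 1_k - 1_n = 0$, $(X^\star)^\top 1_n - \diag(C) = 0$, $Z^\star 1_n - 1_n = 0$, and $\diag(Z^\star) - X^\star \diag(C^{-1}) = 0$. Each of these contributes a term weighted by the corresponding multiplier $y_1, y_2, \alpha_1, \alpha_2$. After collecting coefficients of $X^\star$ and $Z^\star$ and substituting the definitions of $S_{12}$ and $S_{22}$ given in the statement, the expression should rearrange to $\ip{U}{X^\star} + \ip{V}{Z^\star} + \ip{S}{Y^\star}$; the first two inner products are nonnegative since $U,V,X^\star,Z^\star \geq 0$ componentwise.

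To handle $\ip{S}{Y^\star}$ I would invoke Lemma \ref{lem:jansson}. Its hypotheses are met: $Y^\star \succeq 0$ gives $\lambda_{\min}(Y^\star) \geq 0$, and Theorem \ref{theorem:eigboundY} supplies the upper bound $\lambda_{\max}(Y^\star) \leq \max_{i=1,\dots,k} C_{ii} + 1$. Substituting yields $\ip{S}{Y^\star} \geq (\max_i C_{ii} + 1) \sum_{i:\lambda_i(S) < 0} \lambda_i(S)$, and combining with the nonnegative contributions from $\ip{U}{X^\star}$ and $\ip{V}{Z^\star}$ and the previously extracted affine terms gives exactly $p^\star \geq lb$ as claimed.

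The main obstacle is the bookkeeping in the algebraic expansion: compared to the VL proof, there are four types of primal equality constraints (rather than three per block), the multiplier $y_2$ acts on $\diag(C)$ while $\alpha_2$ appears both on the off-diagonal (through $\diag(C^{-1})$) and the diagonal of $S_{22}$, and one must check that the symmetrizations $\tfrac{1}{2}\alpha_1 1_n^\top + \tfrac{1}{2} 1_n \alpha_1^\top$ and $\tfrac{1}{2}(\cdot + \cdot^\top)$ recombine correctly into $S_{22}$ and $S_{12}$. The delicate piece is confirming that the $-\ip{C}{S_{11}}$ term in the dual surrogate exactly cancels the $\ip{S_{11}}{C}$ produced when decomposing $\ip{S}{Y^\star}$ into its block contributions, so that $S_{11}$ remains a free parameter and no feasibility condition on it is implicitly invoked.
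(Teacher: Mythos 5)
Your proposal is correct and follows essentially the same route as the paper's proof: expand $p^\star$ minus the affine dual surrogate using the primal feasibility constraints, regroup into $\ip{U}{X^\star}+\ip{V}{Z^\star}+\ip{S}{Y^\star}$, drop the first two terms by nonnegativity, and bound $\ip{S}{Y^\star}$ via Lemma~\ref{lem:jansson} with the eigenvalue bound $\max_i C_{ii}+1$ from Theorem~\ref{theorem:eigboundY}. Your observation about the $\ip{C}{S_{11}}$ cancellation keeping $S_{11}$ a free parameter is exactly how the paper's block decomposition of $\ip{S}{Y^\star}$ works out.
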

\begin{proof}
{Let $Y^\star = \begin{bmatrix}
C & (X^\star)^\top \\
X^\star & Z^\star
\end{bmatrix}$ be an optimal solution of problem \eqref{prob:mlSDP} with objective function value $p^\star$. }
In order to show that $p^\star \ge lb$, consider the equation
\begin{subequations}
\begin{align*}
    {-\ip{W}{Z^\star} + \ip{W}{I_n}} & - \big( y_1^\top 1_n + y_2^\top \diag(C) + \alpha_1^\top 1_n + \alpha_2^\top 0_n - \ip{C}{S_{11}} + {\ip{W}{I_n}} \big) \\
    & = {-\ip{W}{Z^\star}} - y_1^\top X^\star 1_k - y_2^\top (X^\star)^\top 1_n - \frac{1}{2}\alpha_1^\top (Z^\star + (Z^\star)^\top) 1_n \\
    & - \alpha_2^\top(\diag(Z^\star) - X^\star \diag(C^{-1}))  + \ip{C}{S_{11}} \\
    & = {-\ip{W}{Z^\star}} - \ip{ y_1 1_k^\top}{X^\star} - \ip{1_n y_2^\top}{X^\star} - \frac{1}{2}\ip{ 1_n \alpha_1^\top}{Z^\star} \\
    & - \frac{1}{2} \ip{\alpha_1 1_n^\top}{Z^\star} - \ip{\Diag(\alpha_2)}{Z^\star} + \ip{\alpha_2 \diag(C^{-1})^\top}{X^\star} +\ip{C}{S_{11}} \\
    & = \ip{{-W} - \frac{1}{2} 1_n \alpha_1^\top - \frac{1}{2} \alpha_1 1_n^\top - \Diag(\alpha_2)}{Z^\star} \\
    & + \ip{-y_1 1_k^\top - 1_n y_2^\top + \alpha_2 \diag(C^{-1})^\top}{X^\star} + \ip{C}{S_{11}} \\
    & = \ip{V}{Z^\star} + \ip{U}{X^\star} + \ip{S_{11}}{C} + 2\ip{S_{12}}{X^\star} + \ip{S_{22}}{Z^\star}\\
    & = \ip{V}{Z^\star} + \ip{U}{X^\star} + \ip{S}{Y^\star}.
\end{align*}
\end{subequations}
{The last term can be bounded by using Lemma \ref{lem:jansson} applied on $Y^\star$, with $\bar{y} \geq \lambda_{\max}(Y^\star)$. Here, a suitable bound $\bar{y}$ for the maximum eigenvalue of any feasible solution $Y$ can be obtained by applying Theorem \ref{theorem:eigboundY}. Therefore, we set $\bar{y} = \max_{i=1, \dots, k} C_{ii} + 1$, and applying Lemma \ref{lem:jansson} we get 
\begin{align*}
\ip{V}{Z^\star} + \ip{U}{X^\star} + \ip{S}{Y^\star} & \geq \ip{V}{Z^\star} + \ip{U}{X^\star} + \left(\max_{i=1, \dots, k} C_{ii} + 1\right) \sum_{i\colon \lambda_i(S) < 0} \lambda_i(S) \\
& \geq \left(\max_{i=1, \dots, k} C_{ii} + 1\right) \sum_{i\colon \lambda_i(S) < 0} \lambda_i(S),
\end{align*}
where the last inequality holds because $U$ and $V$ are nonnegative.}
\end{proof}
\noindent


Another way to get valid lower bounds for problems \eqref{prob:vlSDP} and \eqref{prob:mlSDP} is to tune the output results to get a feasible solution for the dual SDP problem. More in detail, from an approximate dual solution, that is not positive semidefinite, a partial dual feasibile solution is obtained by computing its projection onto the positive semidefinite cone. Then, the remaining dual variables are computed by solving an auxiliary LP. If the LP is feasible, we can construct a feasible solution, obtaining a valid dual bound. If it is infeasible, then we are neither able to construct a feasible dual solution nor to construct a dual bound. In this case, we set $lb = - \infty$ and we use the valid bound provided by Theorems \ref{theorem:pp_vl} and \ref{theorem:pp_ml}, respectively.

\section{Cutting-plane Algorithm}
\label{sec:cutting_plane}
The bounds given by relaxations \eqref{prob:vlSDP} and \eqref{prob:mlSDP} are not strong enough to be successfully used within a B\&B framework to solve large ccMSSC problems to optimality. We propose to strengthen the SDP bounds by using polyhedral cuts and solve the resulting SDPs by means of a cutting-plane algorithm. 
As mentioned in \cite{rujeerapaiboon2019size}, trivial inequalities for the vector lifting relaxation are obtained via the reformulation-linearization technique (RLT).
In \cite{rujeerapaiboon2019size}, all RLT inequalities are included at once. We did some experiments adding all the RLT constraints (for small instances) or adding only the violated ones in a cutting-plane fashion. However, the improvement of the bound was marginal and came at a high computational cost. 
This motivated us to look for more effective valid inequalities. To this end, one may add any inequality that is valid for the so-called boolean quadric polytope, which is defined in \cite{padberg1989boolean} as the convex hull of pairs $(\Pi_j, \pi_j)$ satisfying $\Pi_j = \pi_j(\pi_j)^\top$ with $\pi_j \in \{0, 1\}^n$ for $j \in [k]$. One important class of these cuts are the triangle inequalities, defined as
\begin{equation}
\label{eq:triangle_vl}
\begin{split}
    (\pi_j)_r + (\pi_j)_s + (\pi_j)_t & \leq (\Pi_j)_{rs} + (\Pi_j)_{rt} + (\Pi_j)_{st} + 1 \quad \forall j \in [k],\\
    (\Pi_j)_{rs} + (\Pi_j)_{rt} & \leq (\pi_j)_r + (\Pi_j)_{st} \quad \forall j \in [k]
\end{split}
\end{equation}
which hold for all distinct triplets $(r, s, t)$. Note that there are $O(k \binom{n}{3})$ inequalities of type \eqref{eq:triangle_vl}.
Triangle inequalities explore the following property of the clustering problem: if data points $p_r$, $p_s$ and $p_r$, $p_t$ belong to the cluster $j$, then $p_s$ and $p_t$ must be in the cluster $j$ as well.
For the matrix lifting SDP, we consider triangle inequalities of the form
\begin{equation}
    \label{eq:triangle_ml}
    \begin{split}
            Z_{ij} & \leq Z_{ii},\\
            Z_{ij} + Z_{ih} & \leq Z_{ii} + Z_{jh}
    \end{split}
\end{equation}
which hold for all distinct triplets $(i, j, h)$. Constraints \eqref{eq:triangle_ml} are also used to strengthen the LP relaxation in \cite{aloise2009branch} and the SDP relaxation in \cite{piccialli2022sos} for the unconstrained MSSC. Note that there are $O(\binom{n}{3})$ inequalities of type \eqref{eq:triangle_ml}.
Similarly to \eqref{eq:triangle_vl}, these inequalities ensure that if $p_i$, $p_j$ and $p_i, p_h$ are in the same cluster, then $Z_{ii} = Z_{ij} = Z_{ih} = Z_{jh}$ must hold.


The enumeration of all triangle inequalities is computationally inexpensive, even for large instances. However, adding all of them
would make the relaxation intractable even for moderate values of $n$. To keep both the SDPs for the bounding routine and the auxiliary LPs for post-processing to a modest size, we limit the number of triangle inequalities that we can add at each cutting-plane iteration. The triangle inequalities are then sorted by violation magnitude and added starting with the most violated ones.
Once the SDP minimizer is obtained, we remove all inactive constraints and add new violated inequalities. Next, the problem with an updated set of inequalities is solved and the process is iterated as long as the increase of the lower bound is sufficiently large. Indeed, we terminate the bounding routine if the relative difference between consecutive bounds is less than a fixed threshold. If the gap cannot be closed after some cutting plane iterations, we terminate the bounding routine, branch the current node, and start solving new subproblems as described in Section \ref{sec:branching}. Note that, to improve the efficiency of the overall B\&B algorithm, we pass cutting planes from parent to children nodes. This allows us to save a significant number of cutting-plane iterations, and therefore computational time, when processing children nodes. We also point out that, when an SDP is solved with a new set of inequalities, the post-processing phase yielding valid lower bounds is performed by adapting the results in Section \ref{sec:sdp_safe}.

\section{Branching}
\label{sec:branching}
Using cutting planes to tighten the SDP relaxations may not be sufficient to prove the optimality of the best clustering solution found so far.
If there are no violated cuts or the bound does not improve significantly when adding valid inequalities, then we stop the generation of cutting planes and we branch, i.e., we split the current problem into more problems of smaller dimensions by fixing some variables. 
An appropriate branching strategy can help limit the branching decisions and prevent unnecessary bound computations. In the notation of problem \eqref{prob:ccMSSC1}, one may selects a variable $(\pi_j)_i$ having a nonbinary value and produce two subproblems: one with $(\pi_j)_i = 1$ (i.e., the $i$-th data point is assigned to the cluster $j$) and the other with $(\pi_j)_i = 0$ (i.e., the $i$-th data point is not in the cluster $j$). If a subproblem contains additional cluster indicator variables with nonbinary values then this process can be continued to produce a branching tree that will eventually enforce all binary conditions. 
Despite the existence of many branching rules that perform well for generic problems, there also exist branching rules tailored to the specific MSSC problem. Following \cite{piccialli2022exact}, we adopt a branching logic aimed at enforcing pairwise constraints, namely \textit{must-link} and \textit{cannot-link} constraints. More in detail, a must-link constraint is used to specify that the two data points are in the same cluster, whereas a cannot-link constraint states that they can not be placed together. Both types of constraints can be added to the SDP relaxations through linear equality and inequality constraints. To this end, let $[l_{=}] \subset [n] \times [n]$ and $[l_{\neq}] \subset [n] \times [n]$ be the set of must-link and cannot-link constraints, respectively. 
Consider the vector lifting relaxation and define the following sets

{\footnotesize
\begin{align*}
    \mathcal{ML_{\textrm{VL}}} &= \left\{(\pi, \Pi) \in \mathbb{R}^n \times \mathcal{S}^n : \forall (i, j) \in [l_{=}] \ \begin{array}{l}
    \pi_i = \pi_j, \ \Pi_{it} = \Pi_{jt} \ \forall t \in [n]
    \end{array}\right\}, \\
    \mathcal{CL_{\textrm{VL}}} &= \left\{(\pi, \Pi) \in \mathbb{R}^n \times \mathcal{S}^n : \forall (i, j) \in [l_{\neq}] \ \begin{array}{l}
    \pi_i + \pi_j \leq 1, \ \Pi_{ij} = 0
    \end{array}\right\}.
\end{align*}
}%
At any level of the search tree, the vector lifting relaxation with pairwise constraints becomes
\begin{mini}[2]
{}{{\ip{W}{I_n-\sum_{j=1}^{k} \frac{1}{c_j} \Pi_j}}}
    {\label{prob:vlSDP_ml_cl}}{}
\addConstraint{\sum_{j=1}^{k} \pi_j}{= 1_n}{\quad \forall j \in [k]}
\addConstraint{(\pi_j, \Pi_j)}{\in \mathcal{S}_{\textrm{VL}}(c_j) \cap \mathcal{ML}_{\textrm{VL}} \cap \mathcal{CL}_{\textrm{VL}}}{\quad \forall j \in [k]}
\end{mini}
Similarly, consider the matrix lifting relaxation and define the analogues sets
{\footnotesize
\begin{align*}
\mathcal{ML_{\textrm{ML}}} &= \left\{(X, Z) \in \mathbb{R}^{n \times k} \times \mathbb{S}^n : \forall (i, j) \in [l_{=}] \ \begin{array}{l}
    X_{ih} = X_{jh} \ \forall h \in [k], \ Z_{ih} = Z_{jh} \ \forall h \in [n]
    \end{array}\right\},\\ 
    \mathcal{CL_{\textrm{ML}}} &= \left\{(X, Z) \in \mathbb{R}^{n \times k} \times \mathbb{S}^n : \forall (i, j) \in [l_{\neq}] \ \begin{array}{l}
    X_{ih} + X_{jh} \leq 1 \ \forall h \in [k], \ Z_{ij} = 0
    \end{array}\right\}.
\end{align*}
}%
Therefore, at any level of the tree, the problem takes the form
\begin{mini}[2]
{}{{\tr \left(W - W Z \right)}}
    {\label{prob:mlSDP_ml_cl}}{}
\addConstraint{(X, Z)}{\in \mathcal{S}_{\textrm{ML}} \cap \mathcal{ML}_{\textrm{ML}} \cap \mathcal{CL}_{\textrm{ML}}}{}.
\end{mini}
Pairwise constraints partition the set of feasible solutions associated to the parent node into two disjoint subsets. Hence, our algorithm generates a binary enumeration tree: every time a parent node is split into two children, a pair of indices $(i^\star, j^\star)$ is chosen and a must-link and a cannot-link constraint are imposed on their associated problems. Depending on the relaxation, we use the following rules for selecting the branching pair:
{\small
\begin{align*}
(i^\star, j^\star)_{\textrm{VL}} & = \argmax_{i, j=1, \dots n} \left\{ \min_{h = 1, \dots, k} \left\{(\Pi_h)_{ij}, \| (\Pi_h)_{i,:}-(\Pi_h)_{j,:} \|_2^2 \right\} \right\},\\
(i^\star, j^\star)_{\textrm{ML}} & = \argmax_{i, j = 1, \dots, n} \left\{ \min \left\{Z_{ij}, \| Z_{i,:}-Z_{j,:} \|_2^2 \right\} \right\}.
\end{align*}
}
In other words, we choose indices $i^\star$ and $j^\star$ with the least tendency to assign data points $p_{{i}^\star}$ and $p_{{j}^\star}$ to the same cluster, or to different ones. 

\begin{remark}
A branching rule based on pairwise constraints becomes relevant in our context under three different aspects.
First, it allows to constrain multiple variables involving two data points at the same time. This yields tighter relaxations in the children compared to the strategy of enforcing integrality on the variable of a single data point. Second, it allows to exploit symmetry reduction techniques that can significantly reduce the size of the SDPs. Third, it leads to the natural extension of SDP relaxations for ccMSSC to a general class of MSSC with both pairwise and cardinality constraints. To the best of our knowledge, this is the first time that tractable conic formulations are proposed for the MSSC problem with side constraints \cite{liberti2021side}.
\end{remark}

The number of pairwise constraints grows exponentially in the number of B\&B nodes. As a result, the bounding problems may become expensive to solve. However, must-link constraints can be exploited to reduce the size of the subproblems. To this end, by introducing a suitable transformation matrix that maps the data points onto the so-called ``super points'', problems \eqref{prob:vlSDP_ml_cl} and \eqref{prob:mlSDP_ml_cl} can be reformulated as programs on lower dimensional positive semidefinite cones. In the next section, we show that the cardinality constraints can be mapped onto the set of super points.

\subsection{Size reduction}
We consider the undirected graph $G = (V, E)$ where the vertices are the data points and the edges are the must-link constraints, i.e., $V = [n]$ and $E = [l_{=}]$. We find the connected components of $G$ and we assume that there are $m \leq n$ components $[B_1], \dots, [B_m]$. 
Let $T^s$ be the $m \times n$ transformation matrix having $T^s_{ij} = 1$ if $j \in [B_i]$ and 0 otherwise, for each $i \in \{1, \dots, m\}$. Matrix $T^s$ encodes the data points belonging to the same connected component
and vector $e^s = T^s 1_n$ contains the number of data points in each component. Observe that $T^s W (T^s)^\top$ reduces the size of $W$. As a result, instead of clustering the initial set of data points $p_1, \dots, p_n$, we find a $k$-partition of the super points $s_1, \dots, s_m$. In addition to removing must-link constraints, we are able to reduce the number of cannot-link constraints. To see this, we denote by $[l^s_{\neq}]$ the set of cannot-link constraints between super points. A cannot-link is defined on two super points $s_i$ and $s_j$ if there exists a cannot-link on two data points $p$ and $q$ such that $p \in [B_i]$ and $q \in [B_j]$. It is easy to verify that, after this process, cannot-link constraints between data points in different connected components are mapped to the same pair of super points.
\noindent
Hence, we can reduce the size of problem \eqref{prob:vlSDP_ml_cl} at any level of the tree reformulating it as
\begin{mini!}[2]
{}{\tr \left(W - T^{s} W (T^{s})^\top \sum_{j=1}^{k} \frac{1}{c_j} \Pi_j^s \right)}
    {\label{prob:vlSDP_shr}}{}
\addConstraint{\sum_{j=1}^{k} \pi^s_j}{= 1_m}{}
\addConstraint{(e^s)^\top \pi^s_j}{= c_j}{\quad \forall j \in [k]}
\addConstraint{(\pi^s_h)_i + (\pi^s_h)_j}{\leq 1}{\quad \forall h \in [k], \ \forall (i, j) \in [l^s_{\neq}]}
\addConstraint{\begin{bmatrix}
1 & (\pi^s_j)^\top \\
\pi^s_j & \Pi^s_j
\end{bmatrix}}{\in \mathcal{S}_{+}^{m+1}}{\quad \forall j \in [k]}
\addConstraint{(\Pi^s_h)_{ij}}{= 0}{\quad \forall h \in [k], \ \forall (i, j) \in [l^s_{\neq}]}
\addConstraint{\label{eq:vl_shr_diag_PI} \diag(\Pi^s_j)}{= \pi^s_j}{\quad \forall j \in [k]}
\addConstraint{\Pi^s_j e^s}{= c_j \pi^s_j}{\quad \forall j \in [k]}
\addConstraint{\Pi^s_j}{\geq 0_{m \times m}}{\quad \forall j \in [k]}.
\end{mini!}
\begin{theorem}\label{th:vl_reduction}
Problems \eqref{prob:vlSDP_ml_cl} and \eqref{prob:vlSDP_shr} are equivalent.
\end{theorem}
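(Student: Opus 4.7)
The plan is to establish the equivalence by constructing a bijection between the feasible sets of \eqref{prob:vlSDP_ml_cl} and \eqref{prob:vlSDP_shr} that preserves the objective value. The guiding observation is that any $(\pi_j,\Pi_j)$ satisfying $\mathcal{ML}_{\textrm{VL}}$ must be constant on the blocks induced by the connected components $[B_1],\dots,[B_m]$ of the must-link graph, and is therefore faithfully encoded by a reduced pair $(\pi_j^s,\Pi_j^s)$ through $\pi_j=(T^s)^\top\pi_j^s$ and $\Pi_j=(T^s)^\top \Pi_j^s T^s$.

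For the shrink direction, I start from a feasible $(\pi_j,\Pi_j)$ of \eqref{prob:vlSDP_ml_cl}. By iterating the must-link equalities along paths in $G$, I get $(\pi_j)_i=(\pi_j)_{i'}$ and $(\Pi_j)_{it}=(\Pi_j)_{i't}$ for all $i,i'$ in the same component and every $t$. Combined with the symmetry of $\Pi_j$ and $\diag(\Pi_j)=\pi_j$, this further yields $(\Pi_j)_{ss'}=(\Pi_j)_{ss}=(\pi_j)_s$ for all $s,s'\in[B_h]$, so the reduced matrix $\Pi_j^s$ defined by the common block values is well posed and automatically satisfies $\diag(\Pi_j^s)=\pi_j^s$. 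The remaining constraints of \eqref{prob:vlSDP_shr} are then checked by grouping sums component-wise: the assignment and cardinality constraints $\sum_j \pi_j^s=1_m$ and $(e^s)^\top\pi_j^s=c_j$ follow from their original counterparts (with $e^s=T^s 1_n$ absorbing block sizes), $\Pi_j^s e^s=c_j\pi_j^s$ follows from $\Pi_j 1_n=c_j\pi_j$, and the nonnegativity and cannot-link conditions transfer directly to the projected set $[l^s_{\neq}]$. To transfer the PSD property to the $(m+1)\times(m+1)$ lifted matrix, I lift an arbitrary test vector $(\alpha,v)\in\mathbb{R}\times\mathbb{R}^m$ to $(\alpha,w)\in\mathbb{R}\times\mathbb{R}^n$ by $w_i=v_h/|B_h|$ for $i\in[B_h]$ and check that the two associated quadratic forms coincide, so that the PSD inequality is inherited from the original lifted matrix.

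For the blow-up direction, given a feasible $(\pi_j^s,\Pi_j^s)$ of \eqref{prob:vlSDP_shr} I define $\pi_j:=(T^s)^\top\pi_j^s$ and $\Pi_j:=(T^s)^\top \Pi_j^s T^s$. All linear constraints of \eqref{prob:vlSDP_ml_cl}, including the must-link equalities (by construction) and the cannot-link equalities (by the definition of $[l^s_{\neq}]$), are then immediate, while the PSD constraint follows from the congruence
\[
\begin{bmatrix} 1 & \pi_j^\top \\ \pi_j & \Pi_j \end{bmatrix} = \begin{bmatrix} 1 & 0 \\ 0 & (T^s)^\top \end{bmatrix} \begin{bmatrix} 1 & (\pi_j^s)^\top \\ \pi_j^s & \Pi_j^s \end{bmatrix} \begin{bmatrix} 1 & 0 \\ 0 & T^s \end{bmatrix},
\]
which preserves positive semidefiniteness. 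In both directions the objective values coincide because $\ip{W}{\Pi_j}=\tr\bigl((T^s)^\top \Pi_j^s T^s W\bigr)=\ip{T^s W (T^s)^\top}{\Pi_j^s}$, so the two problems share the same optimal value.

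The step requiring the most care is the identification $\diag(\Pi_j^s)=\pi_j^s$ in the shrink direction, which forces me to argue that the diagonal and off-diagonal entries of $\Pi_j$ inside each block $[B_h]$ really coincide; this uses the $\mathcal{ML}_{\textrm{VL}}$ equalities combined with the symmetry of $\Pi_j$ and the constraint $\diag(\Pi_j)=\pi_j$. The PSD transfer via the weighted test-vector lift is the second point where one cannot simply invoke a one-line congruence argument, but rather must exploit the specific block structure of $T^s$. Everything else in the verification reduces to routine bookkeeping with sums aggregated over the connected components.
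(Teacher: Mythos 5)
Your proposal is correct and follows essentially the same route as the paper: expansion via $\pi_j=(T^s)^\top\pi_j^s$, $\Pi_j=(T^s)^\top\Pi_j^s T^s$, reduction via the (weighted) averaging $\Diag(1_m/e^s)T^s$, component-wise bookkeeping for the linear constraints, the trace identity for the objective, and a congruence/quadratic-form lift for positive semidefiniteness. Your explicit observation that the must-link equalities together with symmetry and $\diag(\Pi_j)=\pi_j$ force all entries of $\Pi_j$ within a block to equal the common value of $\pi_j$ is a slightly more careful justification of the step the paper dismisses as holding ``by construction,'' but it does not change the argument.
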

\begin{proof}
Let $\{(\pi_j^s, \Pi_j^s)\}_{j=1}^k$ be a feasible solution of problem \eqref{prob:vlSDP_shr}. For all $j \in [k]$ define $\Pi_j = (T^s)^\top     \Pi_j^s T^s$ and $\pi_j = (T^s)^\top     \pi_j^s$.
This is equivalent to expanding matrices $\{\Pi_j^s\}_{j=1}^{k}$ and vectors $\{\pi_j^s\}_{j=1}^{k}$ by replicating the rows according to the indices of data points involved in must-link constraints. Therefore, for all $(s, t) \in [l]_{=}$, constraints $(\pi_j)_s = (\pi_j)_t$ and $(\Pi_j)_{sh} = (\Pi_j)_{th} \ \forall h \in [n]$ hold by construction. From the definition of $T^s$, $\Pi_j \geq 0_{n \times n}$ holds for all $j \in [k]$. Likewise, from \eqref{eq:vl_shr_diag_PI} constraints $\diag(\Pi_j) = \pi_j$ hold as well. Moreover, for all $j \in [k]$ we have
\begin{align*}
    \sum_{j=1}^{k} \pi_j &= \sum_{j=1}^{k} (T^s)^\top \pi_j^s = (T^s)^\top \sum_{j=1}^{k} \pi_j^s = (T^s)^\top 1_m = 1_n, \\
    1_n^\top \pi_j &= 1_n^\top (T^s)^\top \pi_j^s = (e^s)^\top \pi_j^s = c_j,\\
    \Pi_j 1_n &= (T^s)^\top \Pi_j^s T^s 1_n = (T^s)^\top \Pi_j^s e^s = (T^s)^\top \pi_j^s c_j = c_j \pi_j,
\end{align*}
and
\begin{align*}
    \Pi_j - \pi_j \pi_j^\top \succeq 0
    & \Leftrightarrow \ip{\Pi_j u}{u} - \ip{\pi_j \pi_j^\top u}{u} \\
    & = \ip{(T^s)^\top \Pi_j^s T^s u}{u} - \ip{(T^s)^\top \pi_j^s (\pi_j^s)^\top T^s u}{u} \\
    & = \ip{\Pi_j^s \bar{u}}{\bar{u}} - \ip{ \pi_j^s (\pi_j^s)^\top \bar{u}}{\bar{u}} \geq 0 \quad \forall \bar{u} = T^s u, \ u \neq 0_n. \\
    & \Leftrightarrow     \Pi_j^s - \pi_j^s(\pi_j^s)^\top \succeq 0.
\end{align*}
Finally, we have
\begin{equation*}
{\ip{W}{I_n-\sum_{j=1}^{k} \frac{1}{c_j} \Pi_j} = \ip{W}{I_n -(T^s)^\top \sum_{j=1}^{k} \frac{1}{c_j} \Pi_j^s T^s}
= \tr \left(W - T^{s} W (T^{s})^\top \sum_{j=1}^{k} \frac{1}{c_j} \Pi_j^s \right)
}
\end{equation*}
and thus $\{(\Pi_j, \pi_j)\}_{j=1}^k$ is a feasible solution of problem \eqref{prob:vlSDP_ml_cl} and the values of the objective functions coincide.

Now assume that  $\{(\Pi_j, \pi_j)\}_{j=1}^k$ is a feasible solution of problem \eqref{prob:vlSDP_ml_cl}. For all $j \in [k]$ define $\Pi_j^s = \Diag(1_m / e^s) T^s \Pi_j (T^s)^\top \Diag(1_m / e^s)$ and $\pi_j^s = \Diag(1_m / e^s) T^s \pi_j$.
Constraints \eqref{eq:vl_shr_diag_PI} hold by construction since $\Pi_j^s$ and $\pi_j^s$ are lower dimensional variables obtained from $\Pi_j$ and $\pi_j$. If $\Pi_j$ is nonnegative, then so is $\Pi_j^s$.
Furthermore, we have
\begin{align*}
    \sum_{j=1}^k \pi_j^s &= \sum_{j=1}^k \Diag(1_m / e^s) T^s \pi_j = \Diag(1_m / e^s) T^s 1_n = \Diag(1_m / e^s) e^s = 1_m, \\
    (e^s)^\top \pi_j^s &= (e^s)^\top \Diag(1_m / e^s) T^s \pi_j = \pi_j^\top (T^s)^\top \Diag(1_m / e^s) e^s = \pi_j^\top 1_n = c_j, \\
    \Pi_j^s e^s &= \Diag(1_m / e^s) T^s \Pi_j (T^s)^\top \Diag(1_m / e^s) e^s
    =  \Diag(1_m / e^s) T^s \Pi_j (T^s)^\top 1_m \\
    & =  \Diag(1_m / e^s) T^s \Pi_j 1_n = \Diag(1_m / e^s) T^s \pi_j c_j = \pi_j^s c_j,
\end{align*}
and
\begin{align*}
    \Pi_J^s - \pi_j^s (\pi_j^s)^\top \succeq 0
    & \Leftrightarrow \ip{\Pi_j^s u}{u} - \ip{\pi_j^s (\pi_j^s)^\top u}{u} \\
    & = \ip{\Diag(1_m / e^s) T^s \Pi_j (T^s)^\top \Diag(1_m / e^s) u}{u} \ + \\
    & - \ip{\Diag(1_m / e^s) T^s \pi_j \pi_j^\top (T^s)^\top \Diag(1_m / e^s) u}{u} \\
    & = \ip{\Pi_j \bar{u}}{\bar{u}} - \ip{\pi_j \pi_j^\top \bar{u}}{\bar{u}} \geq 0 \quad \forall \bar{u} = (T^s)^\top \Diag(1_m / e^s) u\\
    & \Leftrightarrow     \Pi_j - \pi_j \pi_j^\top \succeq 0.
\end{align*}
It remains to show that the objective function values coincide:
{\footnotesize
\begin{align*}
\tr(W) - \ip{T^s W (T^s)^\top}{\sum_{j=1}^k  \frac{1}{c_j} \Pi_j^s} 
&= 
\tr(W) - \ip{T^{s} W (T^{s})^\top}{\Diag(1_m / e^s) T^{s} \sum_{j=1}^k \frac{1}{c_j} \Pi_j (T^{s})^\top \Diag(1_m / e^s)}\\
&= 
\tr(W) - \ip{ W }{(T^{s})^\top  \Diag(1_m / e^s) T^{s}\sum_{j=1}^{k} \frac{1}{c_j}\Pi_j (T^{s})^\top \Diag(1_m / e^s) T^{s}}\\
&= 
\tr(W) - \ip{W}{\sum_{j=1}^k  \frac{1}{c_j} \Pi_j} = \ip{W}{I-\sum_{j=1}^{k} \frac{1}{c_j} \Pi_j}.
\end{align*}}
Note that $(T^{s})^\top \Diag(1_m / e^s) T^{s} \sum_{j=1}^k  \frac{1}{c_j} \Pi_j (T^{s})^\top \Diag(1_m / e^s) T^{s}$ ``averages'' over the rows of matrix $\sum_{j=1}^k  \frac{1}{c_j} \Pi_j$. Since these rows are identical due to must-link constraints, the last equation holds.
\end{proof}

Similarly, we can reduce the size of problem \eqref{prob:mlSDP_ml_cl} at any level of the tree reformulating it as
\begin{mini!}[2]
{}{{\tr (W - T^{s} W (T^{s})^\top}{Z^{s}})}
    {\label{prob:mlSDP_shr}}{}
\addConstraint{X^s 1_k}{= 1_m}{}
\addConstraint{(X^s)^\top e^s}{= \diag(C)}{}
\addConstraint{X^s_{ih} + X^s_{jh}}{\leq 1 }{\quad \forall h \in [k], \ \forall (i,j) \in [l^s_{\neq}]}
\addConstraint{{Z^s e^s}}{= 1_m}{}
\addConstraint{\label{eq:diagZshr} \diag(Z^s)}{= X^s \diag(C^{-1})}{}
\addConstraint{Z^s_{ij}}{= 0 }{\quad \forall (i,j) \in [l^s_{\neq}]}
\addConstraint{\label{eq:SDPblockshr} \begin{bmatrix}
C & (X^s)^\top \\
X^s & Z^s
\end{bmatrix} \in \mathcal{S}^{m+k}_{+}}{}{}{}
\addConstraint{X^s \geq 0_{m \times k}, \ Z^s \geq 0_{m \times m}}{}{}
\end{mini!}

\begin{theorem}\label{th:ml_reduction}
Problems \eqref{prob:mlSDP_ml_cl} and \eqref{prob:mlSDP_shr} are equivalent.
\end{theorem}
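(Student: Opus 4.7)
The plan is to mirror the argument used in the proof of Theorem \ref{th:vl_reduction}, constructing explicit bijective maps between feasible solutions of \eqref{prob:mlSDP_ml_cl} and \eqref{prob:mlSDP_shr} that preserve the objective value. The transformation matrix $T^s$ plays the same role as before: its nonzero pattern encodes the partition of $[n]$ into the connected components $[B_1],\dots,[B_m]$ induced by the must-link graph.

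First I would handle the reduction direction. Given $(X^s,Z^s)$ feasible for \eqref{prob:mlSDP_shr}, define the lifted pair $X=(T^s)^\top X^s$ and $Z=(T^s)^\top Z^s T^s$. By construction, rows of $X$ and rows/columns of $Z$ are replicated across indices in the same component, so the must-link equalities in $\mathcal{ML}_{\textrm{ML}}$ hold automatically. The linear constraints $X1_k=1_n$, $X^\top 1_n=\diag(C)$, $Z1_n=1_n$, the cannot-link equalities $Z_{ij}=0$, and the inequalities $X_{ih}+X_{jh}\le 1$ transfer directly using the identities $T^s 1_n=e^s$ and the correspondence $[l_{\neq}^s]\leftrightarrow [l_{\neq}]$. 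The diagonal condition $\diag(Z)=X\diag(C^{-1})$ reduces at each index $i\in[B_a]$ to the scalar equality $Z^s_{aa}=(X^s\diag(C^{-1}))_a$, which is exactly \eqref{eq:diagZshr}. For the block PSD constraint \eqref{eq:SDPblockshr}, I would apply Schur complement: since $C\succ 0$, \eqref{eq:SDPblockshr} is equivalent to $Z^s-X^sC^{-1}(X^s)^\top\succeq 0$, and conjugating by $(T^s)^\top$ yields $Z-XC^{-1}X^\top = (T^s)^\top\bigl(Z^s-X^sC^{-1}(X^s)^\top\bigr)T^s\succeq 0$, which is equivalent (again by Schur complement) to the block PSD constraint of \eqref{prob:mlSDP_ml_cl}. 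Finally, $\tr(T^sW(T^s)^\top Z^s)=\tr(W(T^s)^\top Z^s T^s)=\tr(WZ)$ gives equality of objectives.

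For the reverse direction I would start from a feasible $(X,Z)$ of \eqref{prob:mlSDP_ml_cl} and define $X^s=\Diag(1_m/e^s)T^s X$ and $Z^s=\Diag(1_m/e^s)T^s Z (T^s)^\top\Diag(1_m/e^s)$. The key observation is that the must-link constraints in $\mathcal{ML}_{\textrm{ML}}$ force $X_{ih}$ to be constant as $i$ ranges over $[B_a]$ and $Z_{ij}$ to be constant as $(i,j)$ ranges over $[B_a]\times[B_b]$; therefore these averages simply read off the common values and no information is lost. All linear constraints of \eqref{prob:mlSDP_shr} then follow from straightforward computations such as $X^s 1_k=\Diag(1_m/e^s)T^s 1_n=1_m$ and $(X^s)^\top e^s=X^\top 1_n=\diag(C)$, while the cannot-link and diagonal constraints transfer using the constancy argument above. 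The block PSD constraint is again handled through Schur complement: conjugating $Z-XC^{-1}X^\top\succeq 0$ by $\Diag(1_m/e^s)T^s$ yields $Z^s-X^sC^{-1}(X^s)^\top\succeq 0$, hence \eqref{eq:SDPblockshr}. Objective equality follows from the same trace identity as in the forward direction.

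The main technical obstacle will be the PSD constraint, because here the block matrix includes the fixed block $C$ and one has to argue through Schur complement rather than directly (in contrast to the vector lifting case where the lifted block had a constant entry of $1$ and the Schur complement reduction is essentially immediate). A secondary nuisance is the careful verification that the diagonal identity \eqref{eq:diagZshr} is precisely what is needed to recover $\diag(Z)=X\diag(C^{-1})$ after lifting, since this is the one constraint in \eqref{prob:mlSDP_shr} that mixes the blocks $X^s$ and $Z^s$ with the cluster sizes.
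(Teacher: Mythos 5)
Your proposal is correct and follows essentially the same route as the paper: the same lifting maps $X=(T^s)^\top X^s$, $Z=(T^s)^\top Z^s T^s$ and shrinking maps $X^s=\Diag(1_m/e^s)T^sX$, $Z^s=\Diag(1_m/e^s)T^sZ(T^s)^\top\Diag(1_m/e^s)$, the same averaging observation for the reverse direction, and the same congruence argument for positive semidefiniteness (the paper phrases it via quadratic forms $\ip{Z^s\bar u}{\bar u}-\ip{X^sC^{-1}(X^s)^\top\bar u}{\bar u}$ with $\bar u=T^su$, which is identical to your conjugation by $T^s$). The Schur-complement reduction of the block constraint to $Z-XC^{-1}X^\top\succeq 0$ that you flag as the main obstacle is exactly how the paper handles it as well.
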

\begin{proof}
Let $(Z^s, X^s)$ a feasible solution of problem \eqref{prob:mlSDP_shr}. Define $Z = (T^s)^\top Z^s T^s$ and $X = (T^s)^\top X^s$.
This is equivalent to expanding matrices $Z^s$ and $X^s$ by replicating the rows according to the indices of data points involved in must-link constraints. Therefore, $X_{ih} = X_{jh}$ for all $h \in [k]$ and $Z_{ih} = Z_{jh}$ for all $h \in [n]$ hold by construction. Clearly, $Z \geq 0_{n \times n}$ and $X \geq 0_{n \times k}$ hold as well. Likewise, from $\diag(Z^s) = X^s \diag(C^{-1})$ it is easy to verify that $\diag(Z) = X \diag(C^{-1})$ holds. Moreover, we have that
\begin{align*}
    X1_k &= (T^s)^\top X^s 1_k = (T^s)^\top 1_m = 1_n, \\
    X^\top 1_n &= (X^s)^\top T^s 1_n = (X^s)^\top e^s = \diag(C), \\
    Z1_n &= (T^s)^\top Z^s T^s 1_n = (T^s)^\top Z^s e^s = (T^s)^\top 1_m = 1_n.
\end{align*}
Constraint \eqref{eq:SDPblockshr} can be rewritten as $\ip{Z^s v}{v} - \ip{X^s C^{-1} (X^s)^\top v}{v} \geq 0$ for all $v \neq 0_m$.
Therefore, we have
\begin{align*}
    Z - X C^{-1} X^\top \succeq 0
    & \Leftrightarrow \ip{Z u}{u} - \ip{X C^{-1} X^\top u}{u} \\
    & = \ip{(T^s)^\top Z^s T^s u}{u} - \ip{(T^s)^\top X^s C^{-1} (X^s)^\top T^s u}{u} \\
    & = \ip{Z^s \bar{u}}{\bar{u}} - \ip{ X^s C^{-1} (X^s)^\top \bar{u}}{\bar{u}} \geq 0 \quad \forall \bar{u} = T^s u, \ u \neq 0_n. \\
    & \Leftrightarrow     Z^s - X^s C^{-1} (X^s)^\top \succeq 0.
\end{align*}
Furthermore, $\ip{W}{Z} = \ip{W}{(T^s)^\top Z^s T^s}
= \ip{(T^s)W(T^s)^\top}{Z^s}$
and thus $(Z, X)$ is a feasible solution of problem \eqref{prob:mlSDP_ml_cl} and the values of the objective functions coincide.

\bigskip
\noindent
Assume that $(Z, X)$ is a feasible solution of problem \eqref{prob:mlSDP_ml_cl}, set $X^s = \Diag(1_m / e^s) T^s X$ and $Z^s = \Diag(1_m / e^s) T^s Z (T^s)^\top \Diag(1_m / e^s)$.
Constraint \eqref{eq:diagZshr} follows by construction since $Z^s$ and $X^s$ are the ``shrunk'' matrices of $Z$ and $X$. If $(Z, X)$ is nonnegative, then so is $(Z^s, X^s)$.
Furthermore, we can derive 
\begin{align*}
    X^s 1_k &= \Diag(1_m / e^s) T^s X 1_k = \Diag(1_m / e^s) T^s 1_n = \Diag(1_m / e^s) e^s = 1_m, \\
    (X^s)^\top e^s &= X^\top (T^s)^\top \Diag(1_m / e^s) e^s = (X^s)^\top e^s = X^\top (T^s)^\top 1_m = X^\top 1_n = \diag(C), \\
    Z^s e^s &= \Diag(1_m / e^s) T^s Z (T^s)^\top \Diag(1_m / e^s) e^s
    =  \Diag(1_m / e^s) T^s Z (T^s)^\top 1_m \\
    & =  \Diag(1_m / e^s) T^s Z 1_n
    =  \Diag(1_m / e^s) T^s 1_n
    =  \Diag(1_m / e^s) e^s
    =  1_m.
\end{align*}
Then, we have
\begin{align*}
    Z^s - X^s C^{-1} (X^s)^\top \succeq 0
    & \Leftrightarrow \ip{Z^s u}{u} - \ip{X^s C^{-1} (X^s)^\top u}{u} \\
    & = \ip{\Diag(1_m / e^s) T^s Z (T^s)^\top \Diag(1_m / e^s) u}{u} \ + \\
    & - \ip{\Diag(1_m / e^s) T^s X C^{-1} X^\top (T^s)^\top \Diag(1_m / e^s) u}{u} \\
    & = \ip{Z \bar{u}}{\bar{u}} - \ip{X C^{-1} X^\top \bar{u}}{\bar{u}} \geq 0 \quad \forall \bar{u} = (T^s)^\top \Diag(1_m / e^s) u\\
    & \Leftrightarrow     Z - X C^{-1} X^\top \succeq 0.
\end{align*}
It remains to show that the objective function values coincide:
\begin{align*}
\ip{T^s W (T^s)^\top}{Z^s} 
&= 
\ip{T^{s} W (T^{s})^\top}{\Diag(1_m / e^s) T^{s} Z (T^{s})^\top \Diag(1_m / e^s)}\\
&= 
\ip{ W }{(T^{s})^\top \Diag(1_m / e^s) T^{s} Z (T^{s})^\top \Diag(1_m / e^s) T^{s}} = \ip{W}{Z}.
\end{align*}
In the latter equation, note that $(T^{s})^\top \Diag(1_m / e^s) T^{s} Z (T^{s})^\top \Diag(1_m / e^s) T^{s}$ ``averages'' over selected rows of matrix $Z$. Since these rows are identical due to must-link constraints, the last equation holds.
\end{proof}

\section{Heuristic}
\label{sec:heuristic}
\noindent
In the previous sections we discussed how to obtain strong and inexpensive lower bounds for ccMSSC by using semidefinite programming tools.
In this section, we develop a rounding algorithm that recovers a feasible clustering (and thus an upper bound on the ccMSSC problem) from the solution of the SDP relaxation. By far, the most popular heuristic solving the unconstrained MSSC problem is the $k$-means algorithm \cite{lloyd1982least}. Given the initial cluster centers, $k$-means proceeds by alternating two steps until convergence. In the first step, each observation is assigned to the closest cluster center, whereas in the second step, each center is updated by taking the mean of all the data points assigned to it. The algorithm terminates when the centers no longer change. 
Like other local search heuristics, the quality of the clustering found by $k$-means highly depends on the choice of the initial cluster centers \cite{pena1999empirical, franti2019much}. 

To generate high-quality solutions, we propose a two-phase rounding procedure. In the first phase, we extract the initial cluster centers from the solution of the SDP relaxation solved at each node. Therefore, we exploit the quality of the SDP solution by solving a sequence of relaxations where the underlying clusters become more clearly defined in each cutting-plane iteration. In the second phase, by using integer programming tools, we design a local search procedure inspired by $k$-means that improves the initial clustering while satisfying the cardinality constraints. We run this procedure at each node and we incorporate pairwise constraints described in Section \ref{sec:branching} whenever branching decisions are taken.

Consider the ccMSSC formulation and recall that $\pi_j = X_{:,j}$ is the indicator vector of cluster $j$.
In general, the solution $\tilde{X}$ of problem \eqref{prob:mlSDP} represents a soft assignment matrix since its elements are between 0 and 1. To build a soft assignment from a solution $\{(\tilde{\Pi}_j, \tilde{\pi}_j)\}_{j=1}^k$ of problem \eqref{prob:vlSDP}, we stack the cluster indicator variables as columns of the matrix $\tilde{X} = [\tilde{\pi}_1, \dots, \tilde{\pi}_k]$. 
Recall that $\mathcal{F}$ is the set of all assignments with known cluster sizes as defined in equation \eqref{set:ass}. Furthermore, denote by $\mathcal{ML}$ and $\mathcal{CL}$ the sets of assignment variables satisfying must-link and cannot-link constraints, respectively.
In order to obtain a feasibile clustering, we find the closest assignment matrix $\bar{X}_0$ to $\tilde{X}$ with respect to the Frobenius norm by solving
{\small
\begin{equation}
\label{prob:recover_X}
\begin{split}
    & \min_{X} \left\{ \|X - \tilde{X} \|_F^2 : X \in \mathcal{F} \cap \mathcal{ML} \cap \mathcal{CL} \right\}\\ &= \max_{X} \left\{\ip{X}{\tilde{X}} : X \in \mathcal{F} \cap \mathcal{ML} \cap \mathcal{CL} \right\}.
\end{split}
\end{equation}
}
At the root node, that is when $[l_{=}] = [l_{\neq}] = \emptyset$, problem \eqref{prob:recover_X} can be solved in polynomial time because its constraint matrix is totally unimodular, implying that its LP relaxation is exact.
When pairwise constraints are added at each node, the unimodularity does not hold anymore. However, in practice the problem can be quickly solved to global optimality by using off-the-shelf integer programming solvers.  
The solution $\bar{X}^0$ is a feasible clustering and thus the corresponding centroids $\bar{m}_j = \frac{1}{c_j} \sum_{i=1}^{n} \bar{X}^0_{ij} p_i$ for all $j \in [k]$ can be used to initialize the local search procedure illustrated in Algorithm \ref{alg:cckmeans}.

Note that in principle, if the solver would fail in solving problem \eqref{prob:recover_X}, we could use the same initialization found at the root node or any other random initialization. However, in all our experiments, off-the-shelf software always solved problem \eqref{prob:recover_X} in negligible time.
\begin{algorithm}
\caption{Find a feasible clustering via the SDP solution}
\label{alg:cckmeans}
\begin{algorithmic}
\State{\textbf{Input}: Data points $p_1, \dots, p_n$, cluster sizes $c_1, \dots, c_k$, initial centers $\bar{m}_1, \dots, \bar{m}_k$ extracted from the SDP solution, branching decisions as must-link $[l_{=}]$ and cannot-link $[l_{\neq}]$ constraints}.
\While{$\textit{there are changes in}$ $\bar{m}_1, \dots, \bar{m}_k$}
\State{$\bar{X} = \argmin_{X} \left\{\sum_{i=1}^{n} \sum_{j=1}^{k} X_{ij} \| p_i - \bar{m}_j \|_2^2 \ : \ X \in \mathcal{F} \cap \mathcal{ML} \cap \mathcal{CL} \right\}$}.
\State{$\bar{m}_j = \argmin_{m_j} \left\{\sum_{i=1}^{n} \sum_{j=1}^{k} \bar{X}_{ij} \| p_i - m_j \|_2^2 \right\} = \frac{1}{c_j} \sum_{i=1}^n \bar{X}_{ij} p_i \quad \forall j$}.
\EndWhile
\State{\textbf{Output}:} $\bar{X}$
\end{algorithmic}
\end{algorithm}

Steps 1 and 2 of Algorithm \ref{alg:cckmeans} are reminiscent of a single iteration of $k$-means algorithm for unconstrained clustering. Specifically, Step 1 computes the optimal assignment of each point to the nearest cluster center while adhering to both cardinality and pairwise constraints. The feasible set of the ILP is the same of problem \eqref{prob:recover_X}, and the same considerations hold on the total unimodularity property at the root node, and on the practical behaviour of off-the-shelf solvers.
The minimization in Step 2, instead, admits a closed form solution, and like $k$-means, is given by the sample average of data points assigned to each cluster. Similarly to the $k$-means algorithm, there is no guarantee that the iterates generated from our heuristic will converge to the global minimizer. Differently from the heuristic proposed in \cite{rujeerapaiboon2019size}, our algorithm finds the initial cluster centers by exploiting SDP relaxations strengthened through cutting planes and improves the quality of the initial clustering by appending some iterations of the $k$-means algorithm with both cardinality and pairwise constraints. Numerical experiments show that proposed heuristic is able to find the optimal solution at the root or in the first few nodes.

The overall branch-and-cut algorithm is shown in Algorithm \ref{alg:bbpseudocode}. Algorithm \ref{alg:bbpseudocode} converges to a global minimum with precision $\varepsilon$, thanks to the binary branching that performs an implicit enumeration of all the possible cluster assignments. In practice, a small number of nodes is required thanks to the strength of the lower bounding routine and to the effectiveness of the heuristic, as we will show in the next section. 

\begin{algorithm}
\caption{Branch-and-cut algorithm for ccMSSC}
\label{alg:bbpseudocode}

\textbf{Input}: Gram matrix $W$, number of clusters $k$, matrix of cardinalities $C$, optimality tolerance $\varepsilon$.

\begin{enumerate}[label*=\arabic*., nolistsep] 
    \item Let $P_0$ be the initial ccMMSC problem and set $\mathcal{Q} = \{P_0\}$. 
    \item Set $X^\star = \textrm{null}$ with objective function value $v^\star = \infty$.
    \item While $\mathcal{Q}$ is not empty:
    \begin{enumerate}[label*=\arabic*., rightmargin=15pt, nolistsep]
        \item Select and remove problem $P$ from $\mathcal{Q}$.
        \item Compute a lower bound (either by VL-SDP or by ML-SDP) for problem $P$.
        \item Apply the post-processing via error bounds  and the LP-based post-processing and select the highest valid lower bound $LB$.
        \item If $v^\star<\infty$ and  $(v^\star - LB)/v^\star \leq \varepsilon$, go to \textit{Step 3}.
        \item Search for violated triangle inequalities. If any are found, add them to the current SDP relaxation and go to \textit{Step 3.2}.
        \item Extract the initial cluster centers from the solution of the SDP relaxation and run the heuristic in Algorithm \ref{alg:cckmeans} to get an assignment $X$ and an upper bound $UB$. If $UB < v^\star$ then set $v^\star \leftarrow UB$, $X^\star \leftarrow X$.
        \item Select the branching pair $(i,j)$ and partition problem $P$ into must-link and cannot-link subproblems. For each problem update $T^s$, $e^s$ and $[l_{\neq}^s]$ accordingly, add them to $\mathcal{Q}$ and go to \textit{Step 3}.
    \end{enumerate}
\end{enumerate}
\textbf{Output}: Optimal assignment matrix $X^\star$ with objective value $v^\star$

\end{algorithm}

\section{Computational Results}
\label{sec:results}
In this section, we describe the implementation details and we show numerical results on real-world instances.

\subsection{Implementation details}
Our B\&B algorithm is implemented in C++ with some routines written in MATLAB. The SDP relaxations are solved with SDPNAL+, a MATLAB software that implements an augmented Lagrangian method to solve semidefinite programming problems with bound constraints \cite{sun2020sdpnal+}. We set the accuracy tolerance of SDPNAL+ to $10^{-4}$ in the relative KKT residual and we post-process the output of the solver by using the tecniques described in Section \ref{sec:sdp_safe}. We use Gurobi \cite{gurobi} for the LP-based post-processing and for solving the integer problems in Algorithm \ref{alg:cckmeans}. We run the experiments on a laptop with Intel(R) i7-12700H CPU @ 3.50GHz having 14 cores, 16 GB of RAM and Ubuntu operating system. To improve the efficiency of the B\&B search we concurrently explore multiple nodes of the tree. 
As for the cutting-plane procedure, at each iteration, we separate at most 100000 triangle inequalities, we sort them in decreasing order with respect to the violation and we add the first 10\%. In our numerical tests, the tolerance for checking the violation is set to $10^{-4}$. Furthermore, we use the same tolerance for removing inactive inequalities. We stop the cutting-plane procedure when there are no violated inequalities or the relative difference between consecutive lower bounds is less than $10^{-4}$. Finally, we explore the B\&B tree with the best-first search strategy. The source code is available at \url{https://github.com/antoniosudoso/cc-sos-sdp}.


\begin{table}[!ht]
\centering
{\footnotesize
\begin{tabular}{|l|l|c|c|l|l|l|}
\hline
ID 	&	 Dataset   	&	  $n$   	&	  $d$ 	&	 $k$ 	&	 $c_1, \dots, c_k$ 	&	$f_\textrm{OPT}$	\\\hline
01	&	 Ruspini 	&	75	&	2	&	4	&	 15, 20, 17, 23	&	1.288e+04	\\
02	&	  BreastTissue 	&	106	&	9	&	6	&	 18. 17, 17, 18, 18, 18	&	2.371e+10	\\
03	&	  Hierarchical 	&	118	&	1798	&	4	&	 42, 45, 21, 10	&	 7.424e+06	\\
04	&	  Iris 	&	150	&	4	&	3	&	 50, 50, 50 	&	8.127e+01	\\
05	&	  HapticsSmall 	&	155	&	1092	&	5	&	 18, 34, 34, 36, 33	&	1.794e+04	\\
06	&	  UrbanLand 	&	168	&	147	&	9	&	 23, 29, 14, 15, 17, 25, 16, 14, 15	&	3.498e+09	\\
07	&	  Wine 	&	178	&	13	&	3	&	 59, 71, 48	&	2.398e+06	\\
08	&	  Parkinson 	&	195	&	22	&	2	&	 147, 48	&	1.364e+06	\\
09	&	  Connectionist 	&	208	&	60	&	2	&	 111, 97	&	2.805e+02	\\
10	&	  Seeds 	&	210	&	7	&	3	&	 70, 70, 70 	&	6.056e+02	\\
11	&	  Plane 	&	210	&	144	&	7	&	 30, 30, 30, 30, 30, 30, 30	&	1.693e+03	\\
12	&	  InsectEPG 	&	249	&	601	&	3	&	 89, 118, 42	&	1.360e+03	\\
13	&	  VertebralCol. 	&	310	&	6	&	3	&	 100, 60, 150	&	3.471e+05	\\
14	&	  Fish 	&	350	&	463	&	7	&	 50, 50, 50, 50, 50, 50, 50	&	4.152e+03	\\
15	&	  PowerCons. 	&	360	&	144	&	2	&	 180, 180	&	3.663e+04	\\
16	&	  MusicEmotion 	&	400	&	50	&	4	&	 100, 100, 100, 100	&	5.229e+08	\\
17	&	  GunPointAge 	&	451	&	150	&	2	&	 228, 223	&	2.110e+09	\\
18	&	  Computers 	&	500	&	720	&	2	&	 250, 250	&	3.077e+05	\\
19	&	  {EthanolLevel} 	&	500	&	1751	&	4	&	 126, 124, 126, 124	&	6.223e+03	\\
20	&	  SyntheticCon. 	&	600	&	60	&	6	&	 100, 100, 100, 100, 100, 100	&	1.771e+04	\\
21	&	  AbnormalHea. 	&	606	&	3053	&	5	&	 40, 40, 46, 129, 351	&	2.853e+04	\\
22	&	  ElectricDev. 	&	624	&	256	&	2	&	 543, 81	&	2.916e+13	\\
23	&	  ScreenType 	&	750	&	720	&	3	&	 250, 250, 250	&	4.090e+05	\\
24	&	 Gene 	&	801	&	20531	&	5	&	 300, 78, 146, 141, 136	&	1.781e+07	\\
25	&	  ECGFiveDays 	&	884	&	136	&	2	&	 442, 442	&	3.528e+04	\\
26	&	  UWaweGest. 	&	896	&	945	&	8	&	 122, 108, 106, 110, 127, 111, 112, 100	&	4.505e+05	\\
27	&	  Raisin 	&	900	&	7	&	2	&	 450, 450	&	1.293e+12	\\
28	&	  CBF 	&	930	&	128	&	3	&	 310, 310, 310	&	5.209e+04	\\
29	&	  TwoPatterns 	&	1000	&	128	&	4	&	 271, 237, 250, 242	&	1.033e+05	\\\hline
\end{tabular}
\caption{Real-world instances.  \label{tab:instances}}}
\end{table}

\begin{table}[!ht]
\centering
\footnotesize
\begin{tabular}{|l||ccc|cc||ccc|cc|}
\hline
 &\multicolumn{5}{c||}{B\&B VL-SDP} & \multicolumn{5}{c|}{B\&B ML-SDP}\\\hline
ID & Gap$_{0}$& CP & Gap$_r$ &  Nodes & Time & Gap$_{0}$& CP & Gap$_r$ &  Nodes & Time \\
\hline
01	&	0.00	&	  0 	&	0.00	&	1	&	3	&	0.00	&	0	&	 0.00  	&	1	&	3	\\
02	&	0.04	&	1	&	0.00	&	1	&	10	&	0.09	&	1	&	 0.00 	&	1	&	  8	\\
03	&	0.42	&	1	&	0.00	&	1	&	98	&	8.72	&	5	&	 0.29 	&	19	&	465	\\
04	&	0.00	&	  0 	&	0.00	&	1	&	5	&	0.03	&	1	&	 0.00  	&	1	&	 9 	\\
05	&	1.55	&	  6 	&	0.03	&	  3 	&	  1276 	&	2.14	&	4	&	  0.20	&	31	&	 1662 	\\
06	&	3.02	&	7	&	1.29	&	49	&	16422	&	5.69	&	3	&	2.21	&	165	&	11471	\\
07	&	0.00	&	0	&	0.00	&	1	&	41	&	4.38	&	2	&	 0.75 	&	7	&	291	\\
08	&	0.03	&	  1 	&	0.00	&	1	&	61	&	1.99	&	5	&	  0.00	&	1	&	  274	\\
09	&	0.41	&	1	&	0.00	&	1	&	245	&	6.83	&	3	&	 0.05 	&	3	&	650	\\
10	&	 0.21  	&	1	&	0.00	&	1	&	54	&	0.61	&	1	&	 0.00 	&	1	&	47	\\
11	&	0.65	&	2	&	0.45	&	13	&	3834	&	0.79	&	2	&	0.64	&	87	&	1006	\\
12	&	0.04	&	1	&	0.00	&	1	&	21	&	0.53	&	2	&	0.00	&	1	&	52	\\
13	&	0.10	&	1	&	0.00	&	1	&	583	&	1.01	&	4	&	0.04	&	3	&	471	\\
14	&	0.65	&	6	&	0.02	&	5	&	7885	&	1.31	&	5	&	0.07	&	45	&	 1837 	\\
15	&	0.02	&	1	&	0.00	&	1	&	466	&	0.32	&	2	&	0.00	&	1	&	203	\\
16	&	0.69	&	5	&	0.04	&	7	&	8374	&	1.15	&	3	&	0.06	&	23	&	2297	\\
17	&	0.00	&	0	&	0.00	&	1	&	212	&	0.08	&	2	&	0.00	&	1	&	 460 	\\
18	&	0.59	&	12	&	0.00	&	1	&	2654	&	  0.87 	&	10	&	0.00	&	1	&	1086	\\
19	&	3.29	&	17	&	0.41	&	 1 (0.41\%) 	&	 - 	&	6.33	&	14	&	0.39	&	75	&	21988	\\
20	&	0.92	&	18	&	0.09	&	1	&	28511	&	0.96	&	7	&	0.05	&	3	&	727	\\
21	&	0.85	&	4	&	0.65	&	 9 (0.54\%) 	&	 - 	&	0.88	&	5	&	0.74	&	89	&	27013	\\
22	&	0.00	&	0	&	0.00	&	1	&	432	&	1.27	&	3	&	0.04	&	1	&	584	\\
23	&	0.75	&	14	&	0.66	&	 3 (0.65\%) 	&	 - 	&	0.77	&	15	&	0.08	&	5	&	8643	\\
24	&	0.00	&	0	&	0.00	&	1	&	7109	&	1.16	&	2	&	0.01	&	3	&	 2046 	\\
25	&	0.04	&	0	&	 0.04  	&	1	&	348	&	0.84	&	2	&	 0.05 	&	1	&	431	\\
26	&	2.39	&	6	&	1.53	&	 1 (1.53\%) 	&	 - 	&	2.85	&	16	&	0.72	&	35	&	38171	\\
27	&	0.01	&	 0  	&	0.01	&	1	&	1027	&	0.70	&	1	&	 0.01 	&	1	&	402	\\
28	&	0.69	&	10	&	0.51	&	 5 (0.49\%) 	&	 - 	&	0.91	&	9	&	0.09	&	1	&	4352	\\
29	&	1.77	&	15	&	1.12	&	 1 (1.12\%) 	&	 - 	&	2.67	&	19	&	0.32	&	27	&	34396	\\\hline
\end{tabular}
    \caption{The  ``–'' signs indicate that
the problem instance could not be solved within a time limit of 12 hours (43200 seconds). The times are in seconds and all the reported gaps are in percentage.}
    \label{tab:complete}
\end{table}

\subsection{Results on real-world instances}
In Table \ref{tab:instances}, we report 29 real-world datasets for classification problems with number of data points $n \in [75, 1000]$, number of features $d \in [2, 20531]$, number of clusters $k \in [2, 9]$, and $f_\textrm{OPT}$ that is the optimal value certified by our methodology. They can all be downloaded at UCI\footnote{\url{http://archive.ics.uci.edu/ml}}
and UCR\footnote{\url{http://www.cs.ucr.edu/~eamonn/time_series_data_2018}} websites. Following the related literature \cite{rujeerapaiboon2019size, haouas2020exact}, the number of clusters is assumed to be equal to the number of classes. Furthermore, we set the cluster cardinalities $c_1, \dots, c_k$ to the numbers of true class occurrences in each dataset. We implement two versions of the B\&B algorithm where the only difference is the adopted SDP relaxation: in B\&B VL-SDP we use problem \eqref{prob:vlSDP}, whereas in B\&B ML-SDP we use problem \eqref{prob:mlSDP}. We add triangle inequalities in a cutting-plane fashion as described in Section \ref{sec:cutting_plane}. 
We require the optimality tolerance on the percentage gap of $\varepsilon = 0.01\%$ for instances with $n < 500$ and $\varepsilon = 0.1\%$ for instances with $n \geq 500$, i.e., we terminate each method when $100\cdot\frac{UB - LB}{UB} \leq \varepsilon$, where UB and LB denote the best upper and lower bounds, respectively. In all our experiments, we set a time limit of 12 hours of computing time.

In Table \ref{tab:complete}, we compare the two versions of our B\&B algorithm. Here, we report the instance id and for each method some statistics relative to the root node, the total number of processed nodes and the computational time in seconds. As for the root node, we report the percentage gap after solving the SDP without adding triangle inequalities (Gap$_0$), the number of cutting-plane iterations (CP), and the percentage gap at the end of the cutting-plane procedure (Gap$_r$).
Whenever the time limit is reached, we set the time to ``-'' and we report in brackets in the ``Nodes'' column the gap when the algorithm stops. It turns out that B\&B ML-SDP solves to the required precision all the instances within the time limit. One the other hand, B\&B VL-SDP outperforms B\&B ML-SDP when $n$ and $k$ are small. We also stress that when B\&B VL-SDP stops for the time limit, the optimality gap is always smaller than $2\%$. Looking at the table, the results confirm that the bound produced by the vector lifting relaxation is stronger (the number of nodes is always smaller than the one of B\&B ML-SDP) and computationally tractable for small $n$ and $k$. When $n$ and $k$ increase, the efficiency of the matrix lifting relaxation allows to process a larger number of nodes in a smaller amount of time. Note that the reduction in size of the SDP described in Section \ref{sec:branching} helps to limit the computational time when the number of nodes increases. Therefore, the influence of the size reduction is stronger for B\&B ML-SDP where the number of nodes is larger and hence the branching decisions are more frequent.

Many instances are solved at the root node (especially with B\&B VL-SDP), and this depends both on the strength of the lower bound (thanks to the cutting-plane procedure) and on the excellent upper bound produced by the heuristic. The statistics on the root node confirm that the vector lifting relaxation is stronger: in 8 of 29 instances it is tight (no cutting-plane iterations are performed), and in general the number of cutting-plane iterations is much smaller than the one needed for the matrix lifting relaxation. However, at the end of the root node, the difference in gap between the two relaxations becomes almost negligible, thanks to the cutting-plane procedure. Results confirm that adding inequalities significantly reduces the gap, and allows in many cases to close the gap at the root node: with B\&B VL-SDP it happens on 19 out of 29 instances, and with B\&B ML-SDP on 13 instances out of 29. In general, the gap at the end of the root node is always below 2\% for B\&B VL-SDP and 3\% for B\&B ML-SDP. When $k$ increases, the gap increases, and also finding the global minimum by the heuristic becomes harder. Indeed, the only instances where the global minimum is not found at the root node are: UrbanLand, Plane, EthanolLevel, AbnormalHeart, UWaweGesture. 
A significant example is UrbanLand, where $k=9$. This is the instance where the gap at root node is higher, and also the optimal solution is not found at the root, but it is found after 16 and 85 nodes in B\&B VL-SDP and B\&B ML-SDP, respectively. 
{To get a better understanding of the cutting-plane contribution at the root node, in Figures \ref{fig:cp1} and \ref{fig:cp2} we plot for VL-B\&B and ML-B\&B the relative gap versus the number of CP iterations whenever $\textrm{CP} \geq 3$. The ``x'' marker indicates that the global upper bound has been updated at the corresponding CP iteration.
These plots reveal two insights. First, during the initial iterations, the gap diminishes at a faster rate. However, it's noteworthy that the subsequent iterations are still valuable, as they lead to further gap reduction and frequently result in UB updates due to the improved SDP solution.}


Although the method in \cite{rujeerapaiboon2019size} does not guarantee globally optimal solutions, the SDP bound and the rounding heuristic are able to prove the optimality of Iris, Parkinson and Seeds: problem \eqref{prob:vlSDP} is tight for Iris and one cutting-plane iteration is sufficient for solving Parkinsons and Seeds. Despite the relevance of ccMSSC problem the constraint programming solver in \cite{haouas2020exact} is the only computational study on exact approaches appeared in the literature, and the code is available. Thus, we run their code on our machine considering all the small-scale instances, i.e., $n\le 210$, and compare the performance of our B\&B algorithms against it. The algorithm in \cite{haouas2020exact} is able to find the globally optimal solution of BreastTissue, Iris, Parkinsons, Ruspini and Wine in 14, 9, 29397, 8, and 8224 seconds, respectively.  Hence the method is competitive with our approaches apart from Wine and Parkinsons, where we show vastly improved results in terms of computational time. For the remaining small-scale instances, we report the relative gap after the time limit of 12 hours: Hierarchical 27.32\%, HapticsSmall 56.01\%, UrbanLand 34.73\%, Connectionist 19.03\%, Seeds 1.49\%. Given the large gap on these small-scale instances, we did not run their method on the large-scale ones. 
 
Summarizing, we are able to solve for the first time instances having sizes approximately 10 times larger than those solved by previous exact approaches. The cutting plane procedure is fundamental for closing the gap, combined with the effectiveness of the heuristic, that allows to find the global minimum at the root node or in a few nodes. When the size is small, the vector lifting relaxation is preferable thanks to the strength of the bound, but when $n$ and $k$ increase, the matrix lifting relaxation allows to get a more tractable node in terms of computational time, and the cutting plane procedure makes the bound still competitive with respect to the vector lifting.

\begin{figure}[!ht]
    \centering
    \includegraphics[scale=0.52]{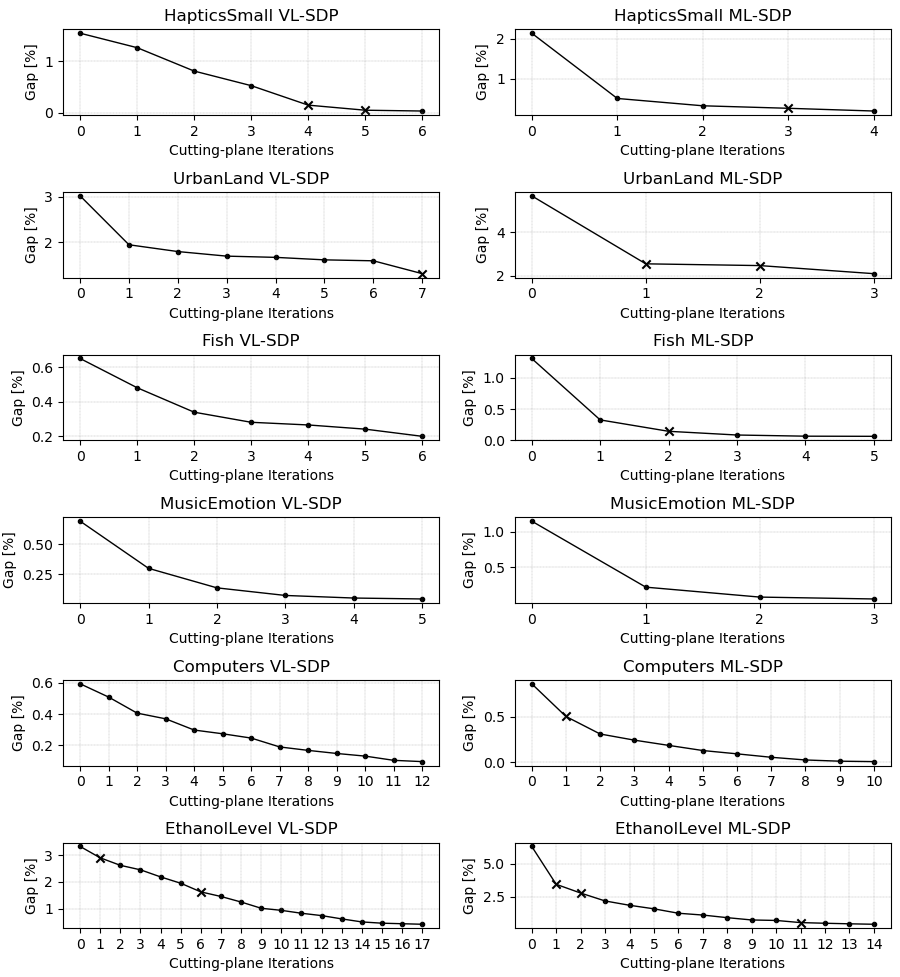}
    \caption{{Gap versus cutting-plane iterations at the root node. The ``x'' marker indicates that the global upper bound has been updated at the corresponding iteration.}}
    \label{fig:cp1}
\end{figure}

\begin{figure}[!ht]
    \centering
    \includegraphics[scale=0.52]{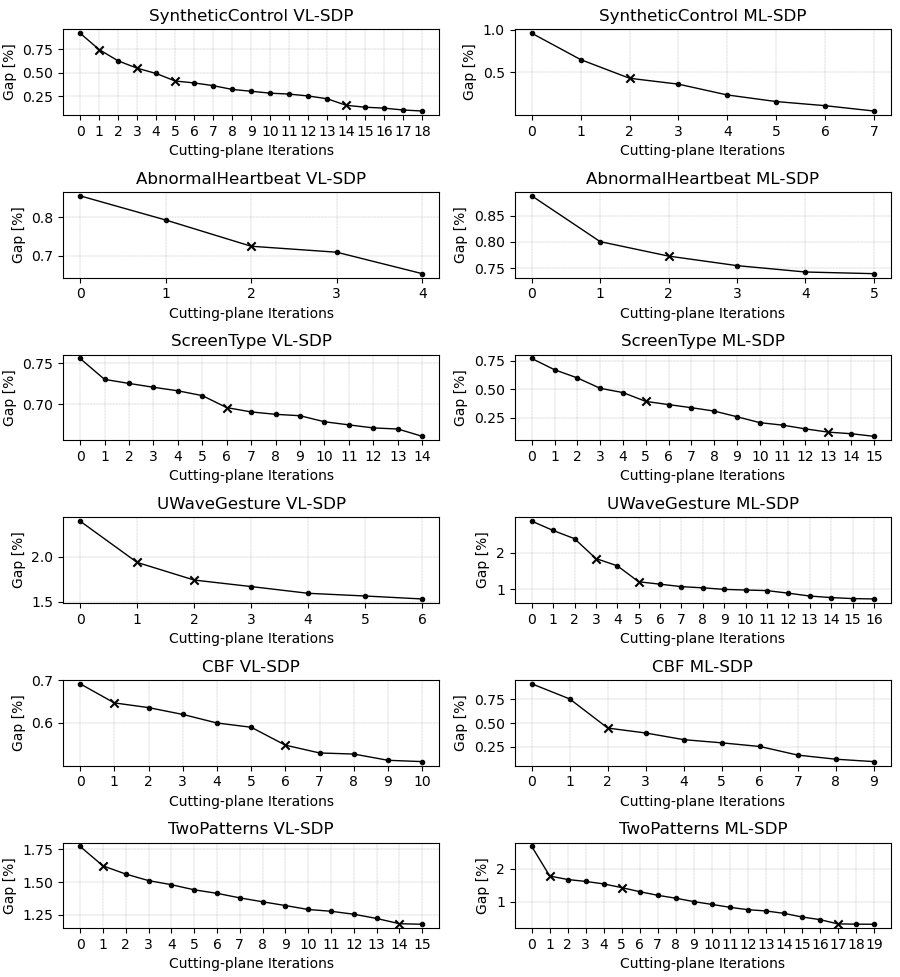}
    \caption{{Gap versus cutting-plane iterations at the root node. The ``x'' marker indicates that the global upper bound has been updated at the corresponding iteration.}}
    \label{fig:cp2}
\end{figure}

\section{Conclusions}
\label{sec:conclusions}
In this paper, we proposed an SDP-based branch-and-cut algorithm for solving MSSC with strict cardinality constraints. For computing the lower bound, we used the SDP relaxation recently proposed in \cite{rujeerapaiboon2019size} for small-scale instances, whereas for large-scale ones we derived a new SDP relaxation. We implemented a cutting-plane algorithm to strengthen the bounds provided by both relaxation by adding polyhedral cuts. For the upper bound computation, we designed a constrained variant of $k$-means algorithm and we initialized it with the solution of the SDP relaxation solved at each node. 
Numerical results impressively exhibit the efficiency of our solver: when using the new SDP bound, we can solve real-world instances up to 1000 data points, whereas when using the bound in \cite{rujeerapaiboon2019size} we can only solve small-size problems and guarantee an optimality gap smaller than 2\% on larger instances. To the best of our knowledge, no other exact solution methods can handle real-world instances of that size. 
{An interesting research direction to improve our methodology is to study facial reduction for both the VL-SDP and ML-SDP relaxations, following \cite{li2021strictly}. This should make the solution of the SDPs faster and more robust, leading to a more efficient branch-and-cut algorithm.}
Furthermore, an immediate future research direction is to design global algorithms for clustering problems with fairness constraints. As introduced in \cite{chierichetti2017fair}, the goal of fair clustering is to find a partition where all the clusters are balanced with respect to some protected attributes such as gender or religion.

\section*{Acknowledgements}
Veronica Piccialli has been supported by PNRR MUR project PE0000013-FAIR.

\section*{Competing Interests}
The authors have no relevant financial or non-financial interests to disclose.

\bibliography{abbr, references}


\end{document}